\renewcommand\@makefntext[1]{\noindent #1}
\numberwithin{equation}{section}
\numberwithin{figure}{section}
\theoremstyle{definition}
\newtheorem{defn}{Definition}[section]
\newtheorem{ex}{Example}
\newtheorem{rmk}{Remark}[section]
\theoremstyle{plain}
\newtheorem{prop}{Proposition}[section]
\newtheorem{lem}{Lemma}[section]
\newtheorem{thmm}{Theorem}
\newtheorem{thm}{Theorem}[section]
\newtheorem{cor}{Corollary}[section]
\xpatchcmd{\proof}{\itshape}{\normalfont\proofnamefont}{}{}
\newcommand{\proofnamefont}{\itshape}
\titleformat{\section}{\normalfont\large\bfseries}{\S\thesection}{1em}{}
\begin{document}
\fontsize{15pt}{16pt}\selectfont
\vspace*{-0.75truein}
\fancyhead[CE]{\footnotesize Wu-Hsiung Huang} 
\fancyhead[CO]{\footnotesize Singularities and Topological Change for Deforming Domains in Manifolds} 
\setcounter{section}{0}
\setcounter{footnote}{0}
\setcounter{page}{1}
\thispagestyle{plain}

\vspace*{0.5truein}%
\footnotetext{%
Date: 2026-06-26 \\
2020 \emph{Mathematics Subject Classification}.\\Primary: 58K99, 58K60, 53C23, 35J15;~Secondary: 58K45, 58J20, 53e99, 35R01. \\
\emph{Key words and phrases}: \\
$C^0$-deformation,~topological change,~quasi-Lipschitz domain,~gluing map.}
\begin{center} 
\large \textbf{Singularities and Topological Changes\\of Deforming Domains in Manifolds}
\end{center}

\begin{center} 
Wu-Hsiung Huang
\end{center}

\vspace{0.1truein}%
\begin{quotation}
\noindent
Abstract. The main purpose of this paper is to investigate when topology jumps while analysis remains continuous. Given a $C^{0}$-deformation of domains $D(t)$ in a manifold $M^{n}$, allowing the topological type of the domains $D(t)$ to vary with $t$, in what situations are the analysis notions associated with $D(t)$  continuous in $t$, so that the machinery of analysis is still working along the deformation? This type of problem arose in our previous work \cite{H24} for domains on constant mean curvature (CMC) hypersurfaces in $\mathbb{R}^{n+1}$. In the present paper, we consider a general setting in which the deforming domains are situated in an arbitrary smooth manifold $M^{n}$ equipped with a self-adjoint strongly elliptic operator $L$, replacing the stability operator for CMC hypersurfaces in $\mathbb R^{n+1}$ considered in \cite{H24}.

We introduce the notion of quasi-Lipschitz domains by gluing certain boundary points of a Lipschitz domain in a specific manner, thereby allowing the topology of the deforming domain $D(t)$ to  change. The continuity theorems and the existence of the required deformations are proved.
We establish that any monotone $C^{0}$-deformation of quasi-Lipschitz domains in $M^{n}$ satisfies Sobolev continuity and eigenvalue continuity for the operator $L$ along the deformation parameter $t$. As a consequence, a \emph{global} Morse index theorem is obtained. Furthermore, given any quasi-Lipschitz domain $D$ in $M^{n}$, we construct a $C^0$-deformation from a small $n$-ball to the domain $D$, along which the topology of $D(t)$ may change, while the required continuity properties remain valid, and the  Morse index theorem holds for the deformation.
\end{quotation}

\section{Introduction}
Let $M^{n}$ be an oriented \emph{smooth} (i.e., $C^{\infty}$-) manifold without boundary. A Lipschitz domain $D$ in $M^{n}$ is a connected open set in $M^{n}$, whose closure is compact and whose boundary is Lipschitz.\footnote{$^1$Throughout this paper, we regard $M^{n}$ as a Riemannian manifold without loss of generality, since (by John Nash) $M^{n}$ can be equipped with a Riemannian metric by embedding it into an Euclidean space}.

A \emph{quasi-Lipschitz domain} in $M^{n}$ is defined by gluing some of the boundary points of a Lipschitz domain in $M^{n}$  in a prescribed manner (see Definition~\ref{D2.5} for a precise definition).

\begin{defn} \label{D1.1}
Given a family
\begin{equation} \label{e1.01}
  \mathcal{D} := \{ D(t) \subset M^{n} \mid 0 \leq t \leq b \}
\end{equation}
of quasi-Lipschitz domains $D(t)$ in $M^{n}$, such that
$D(t) \subsetneq D(r)$ for $0 \leq t < r \leq b$, with
\begin{align}\label{e1.02}
  D(t) = \bigcup_{s < t} D(s) \quad \textrm{and} \quad
  \overline{D(t)} = \bigcap_{r > t} \overline{D(r)},
\end{align}
we call $\mathcal{D}$ a \emph{$C^{0}$-deformation of enlarging quasi-Lipschitz domains in $M^{n}$}, or simply a \emph{monotone $C^{0}$-deformation} on $M^{n}$. For convenience, we may also simply call $\mathcal{D}$ a \emph{$C^{0}$-deformation} on $M^{n}$.
\end{defn}

The notion of ``quasi-Lipschitz domain" is introduced (in Definitions~\ref{D2.4} and \ref{D2.6}) for allowing topological changes of $D(t)$ along a given $C^0$-deformation in $t$. One of the purposes of this paper is to analyze the relevant Sobolev-theoretic properties near the ``singularities" created  by the gluing points on its boundary $\partial D(t)$.\\

For a quasi-Lipschitz domain $D$ in $M^{n}$, we consider $L^{2}(D)$ with the $L^{2}$-metric
\begin{align}\label{e1.03}
  \langle f, g \rangle_{L^{2}(D)}
  := \int_{D} fg \, dM
  = \int_{D} fg,
\end{align}
where $dM$ is the volume form of $M^{n}$ given by the equipped Riemannian metric on $M^{n}$. However, the notation ``$dM$" is usually omitted later, as seen in the last equality. Let
\begin{align}\label{e1.04}
  \mathcal{F}(D)
  := \{ u \in C^{\infty}(D) \cap C^{1}(\overline{D}) \mid
    u|_{\partial D} = 0 \}
\end{align}
and let $L$ be a globally defined strongly elliptic second-order operator on $M^{n}$ such that, for every quasi-Lipschitz domain $D \subset M^n$,
\begin{equation}
\label{e1.05}
  L \colon \mathcal{F}(D) \to C^{\infty}(D)
\end{equation}
is formally self-adjoint in $L^{2}(D)$, i.e.,
$\langle Lu,v \rangle _{L^{2}(D)} =\langle u,Lv \rangle _{L^{2}(D)}$. Locally, $L$ can be written as
\begin{equation} \label{e1.06}
  Lu = -a^{ij} \partial_{i} \partial_{j}u + b^{k} \partial_{k}u + cu,
\end{equation}
in which $\partial_{i} = \frac{\partial u}{\partial x^{i}}$; $a^{ij} = a^{ij}(x)$, $b^{k} = b^{k}(x)$ and $c = c(x)$ are smooth in $x \in D$. Note that $a^{ij} = a^{ji}$, and $x = (x^{1},x^{2},\ldots,x^{n}) \in \mathbb{R}^{n}$ denotes a local coordinate of $D$ in  
$M^{n}$. By saying that $L$ is strongly elliptic, we mean that
\begin{equation} \label{e1.07}
  a^{ij} \xi_{i} \xi_{j} \geq \alpha |\xi|^{2},
    \quad \textrm{$\alpha$ a positive constant},
\end{equation}
for all $\xi = (\xi_{1},\xi_{2},\ldots,\xi_{n}) \in \mathbb{R}^{n}$. It is easy to verify that $L$ of the form \eqref{e1.06} is  formally self-adjoint in $L^{2}(D)$, if and only if
\begin{equation}\label{1.5a}
  Lu = -\partial_{i}(a^{ij} \partial_{j}u) + cu.
\end{equation}
Remark that the strong ellipticity \eqref{e1.07} is preserved under coordinate changes. The operator $L$ naturally induces a symmetric bilinear form $B_{L}$ on $\mathcal{F}(D)$ by
\begin{equation} \label{e1.08}
\begin{split}
  B_{L}(u,v)
  &= \int_{D} (-\partial_{i}(a^{ij} \partial_{j}u) + cu) \cdot v \\
  &= \int_{D} Lu \cdot v
  = \int_{D} u \cdot Lv.
\end{split}
\end{equation}

Consider the Sobolev space $W^{1,2}(D)$ consisting of all $L^{2}$-functions on $D$, whose first weak derivatives exist and belong to $L^{2}(D)$. Equip on the boundary $\partial{D}$ of $D$ the codimension-one Hausdorff measure induced by the volume form $dM$ of $D$.  Denote
\begin{equation} \label{e1.10}
\begin{split}
 \mathcal{F}_{0}(D)
  &:= \big\{ u \in C^{\infty}(D) \cap C^{0}(\overline{D}) ;\,\,
    u|_{\partial D} = 0 \big\}, \\
  \mathcal{F}_{cpt}(D)
  &:= \big\{ u \in C^{\infty}(D) ;\, \,\overline{\operatorname{supp}u} \subset D
    \big\},
\end{split}
\end{equation}
where $\overline{\operatorname{supp}u}$ means the closure of the support of $u$ in $D$. Evidently, $\mathcal{F}_{cpt}(D) \subset \mathcal{F}(D) \subset \mathcal{F}_{0}(D)$.

\begin{defn}\label{D1.2} Given a Lipschitz domain $D$ in $M^n$, a Sobolev function $g\in W^{1,2}(D)$ satisfies the \emph{boundary $L^2$-vanishing} condition, if there exist $f_k \in C^{\infty}(D) \cap C^{0}(\overline{D})$ such that $\int_{\partial{D}}f_k^{2} \rightarrow 0$ and $ \|g-f_k\|^{2}_{W^{1,2}(D)} \rightarrow 0$, as $k \rightarrow \infty$. We denote the totality of such Sobolev functions by $H(D)$, throughout the present paper.

\end{defn}
By Theorem~\ref{T3.2}, which will be proved later in Section~\ref{S3}, the closures of
\[
  \mathcal{F}_{cpt}(D), \quad
  \mathcal{F}(D), \,\,\textrm{and} \quad
  \mathcal{F}_{0}(D) \cap W^{1,2}(D) \quad 
\]
in $W^{1,2}(D)$ all coincide with the same space $H(D)$. Thus $H(D)$ is also the space of Sobolev functions in $W^{1,2}(D)$ with \emph{zero trace} on the boundary $\partial D$, which is defined as the closure of $\mathcal{F}_{0}(D) \cap W^{1,2}(D)$ and is denoted by $W^{1,2}_{0}(D)$ in the literature. Theorem~\ref{T3.2} asserts that $H(D) = W^{1,2}_{0}(D)$ = the closure of $\mathcal{F}_{cpt}(D)$ in $W^{1,2}(D)$.

\begin{rmk} \label{R1.1}
Let $D$ be the open interval $(0,1)$ in $\mathbb{R}^{1}$, $u \in \mathcal{F}_{0}(0,1)$ and $u(x) = \sqrt{x}$ around $x = 0$. Then $u \notin W^{1,2}(0,1)$ since $u'(x) = 1/(2\sqrt{x})$ around $x = 0$ and $u' \notin L^{2}(0,1)$. Hence, in general, $\mathcal{F}_{0}(D) \nsubseteq W^{1,2}(D)$.
\end{rmk}

Extend the symmetric bilinear form $B_{L}$ given in \eqref{e1.08} to the Sobolev space $H(D)$, still denoted by $B_{L} \colon H(D) \times H(D) \to \mathbb{R}^{1}$, where $\partial_{i}u$ and $\partial_{i}v$ in \eqref{e1.08} are replaced by the weak derivatives $D_{i}u$ and $D_{i}v$ respectively, i.e.,
\begin{equation} \label{e1.11}
  B_{L}(u,v)
  = \int_{D} a^{ij} D_{j}u D_{i}v + cuv.
\end{equation}
We call $u \in H(D)$ a \emph{Jacobi field} on $D$ if $B_{L}(u,v) = 0$ for all $v \in H(D)$. For such Sobolev Jacobi fields, the classical equation ``$Lu = 0$" in the $C^\infty$-sense is 
generally meaningless, since second order weak derivatives of $u$ need not exist.

In the Sobolev setting on $H(D)$, we have the spectra of eigenvalues $\lambda_{k}$ of $B_L$:
\[
  \lambda_{1} \leq \lambda_{2} \leq \cdots \leq \lambda_{k} \leq \cdots
  \to \infty,
\]
obtained by the standard variational theory of Rayleigh quotients. Formally, the eigenfunctions $u_k$  satisfy $Lu_{k} = \lambda_{k} u_{k}$, where $u_{k}$ are the corresponding $k$-th eigenfunction of $L$ on $D$. Namely,
$B_{L}(u_{k},v)= \lambda_{k}\langle u_{k}, v\rangle _{L^2{(D)}}$, for any $v\in H(D)$, and
it is known that
\begin{equation} \label{e1.12}
\begin{split}
  \lambda_{k}
  &= \min \{ B_{L}(u,u) \mid u \in S_{k} \} \\
  &= \min_{V^{k}} \big\{ \max \{ B_{L}(u,u) \mid u \in V^{k} \cap S \} \big\},
\end{split}
\end{equation}
where $S$ is the unit sphere $\{ u \in H(D) \mid \|u\|_{L^{2}(D)} = 1 \}$ in $H(D)$,
\[
  S_{k}
  := \{ u \in S \mid \langle u, u_{j} \rangle = 0
    \textrm{ for all $j = 1,2,\ldots,k-1$.} \}
\]
and $V^{k} \subset H(D)$ denotes $k$-dimensional linear subspaces of $H(D)$. Furthermore, the eigenfunction $u_{k}$ attains the minimum of $B_{L}$ in the first formula of \eqref{e1.12}.

By elliptic regularity theory with Sobolev embedding theorem, the eigenfunctions $u_{k}$ are regular; in particular, $u_k \in \mathcal{F}_0(D)$. (A similar regularity argument was provided in Remark 2.8 in our previous paper \cite{H24}.) 

In particular, a Jacobi field on $D$ defined as above is also in $\mathcal{F}_{0}(D)$, by the same regularity argument. We may select $u_{k}$ such that $\{ u_{1},u_{2},\ldots \}$ constitutes an orthonormal basis of $H(D)$. Furthermore, by \eqref{e1.12}, it is straightforward to see that
\begin{equation} \label{e1.13}
  D \subset D' \quad \Longrightarrow \quad
  \lambda_{k}(D) \geq \lambda_{k}(D'), \quad \forall\, k = 1,2,\ldots,
\end{equation}
where $D$ and $D'$ are two domains in $M^{n}$ and $\lambda_{k}$ are the eigenvalues corresponding to the eigenfunction $u_{k}$.

\begin{defn} \label{D1.3}
We define the \emph{Morse index} $\operatorname{Ind}(D)$ on $D$ to be the maximal dimension of linear subspaces of $H(D)$ on which the bilinear form $B_{L}$ is negative definite. By the \emph{nullity} $\nu (D)$ of $L$ on $D$, we mean the dimension of all Jacobi fields on $D$, i.e., $\nu(D)$ is defined to be the number of linearly independent Jacobi fields on $D$.
\end{defn}

\begin{defn} \label{D1.4}
We say that the operator $L$ satisfies \emph{the unique continuation property on $M^{n}$} if the following holds: Given $D$, which is a domain (i.e. open and connected) in $M^{n}$, and $u \in C^{\infty}(D)$ with $Lu = 0$. If $u$ vanishes on a nonempty open subset of $D$, then $u \equiv 0$ on $D$.
\end{defn}

\begin{thmm}[Continuity and Existence] \label{TA}

Let $L$ be a self-adjoint strongly elliptic operator of second order defined on $M^{n}$, which satisfies the unique continuation property on $M^n$. For any monotone $C^{0}$-deformation $\mathcal{D}$ on $M^{n}$ (given in Definition~\textup{\ref{D1.1}}), we have: (i) the Sobolev continuity in $t$, i.e.,
\begin{equation} \label{e1.14}
  \overline{\bigcup_{s < t} H_{s}} = H_{t} = \bigcap_{r > t} H_{r},
\end{equation}
where $H_{t} := H(D(t))$ for any $t \in [0,b]$, and each $H_{t}$ is regarded as a subset of $H_{b}$, by letting the Sobolev functions in $H_{t}$ zero on $D(b)\, \setminus\, D(t)$; (ii) the eigenvalue continuity, i.e., the eigenvalues $\lambda_{k}(t):=\lambda_{k}(D(t))$ of $L$ depend continuously on $t$.
Furthermore, given any  quasi-Lipschitz domain $D$ of arbitrary topological type, and  a point $p_{0} \in D$, there exists a $C^{0}$-deformation $\mathcal{D}$ of enlarging quasi-Lipschitz domains $D(t)$ in $M^{n}$, $0 \leq t \leq b$, such that $D(0)$ is a small n-ball $B^{n}(p_{0})$ of $p_{0}$ in $M^{n}$ and $D(b)$ coincides with the prescribed domain $D$. 
\end{thmm}

Indeed, $B^{n}(p_{0})$ stated above is not necessarily an n-ball. It can be a \emph{ball-like neighborhood} of $p_0$, which means a diffeomorphic image of an n-ball. As a corollary of Theorem~\ref{TA}, given two Lipschitz domains $D$ and $D'$ in $M^{n}$, there exists a $C^{0}$-deformation $\{ D(t) \mid a \leq t \leq b \}$ from $D'$ to $D$ with eigenvalues of $L$ continuous in $t$ (by passing through a small $n$-ball with domains shrinking and then enlarging).

\begin{thmm} \label{TB}
(A global Morse index theorem)~Let $D \subset M^{n}$ be a Lipschitz domain in $M^{n}$ and $L$ be given as in Theorem~\textup{\ref{TA}}. The Morse index $\operatorname{Ind}(D)$ of $L$ on $D$ can be counted by
\begin{equation} \label{e1.15}
  \operatorname{Ind}(D) = \sum_{0 \leq t < b} \nu(D(t)),
\end{equation}
where $\mathcal{D} := \{ D(t) \subset M^{n} \mid 0 \leq t \leq b \}$ is a $C^{0}$-deformation of enlarging quasi-Lipschitz domains satisfying $D(0) = B^{n}(p_0)$ of a point $p_{0} \in D$ and $D(b) = D$. The topological types of quasi-Lipschitz domains $D(t)$ in $\mathcal{D}$ are allowed to change along the deformation.
\end{thmm}

Theorems~\ref{TA} and \ref{TB} will be proved in the later sections of this paper. The topological types of domains considered in the two theorems may change, for example, from a ball to a ring domain (letting $M^2$ a cylinder), or to a domain of various topological types. In this sense, we call Theorem~\ref{TB} a ``global" Morse index theorem, since  the deforming domain $D(t)$ can ``reach far", i.e. given \emph{any} Lipschitz domain $D$ in $M^n$, a small ball-like neighborhood $B^{2}(p_0)$ of any point $p_0$ in $D$ can be enlarged along the deformation to become  $D$. Note that the eigenvalues $\lambda_{k}(t)$ of $L$ are still continuous in $t$, and the Morse index $\operatorname{Ind}(D)$ can be counted by \eqref{e1.15}.\\

 In Smale's earlier contribution \cite{S65}, the Morse index theorem is ``local", as the initial domain $D(0)$ and the last one $D(b)$ must be of the same topological type. For example, if $M^2$ is a cylinder, a ring domain $D$ in $M^2$, can never be reached starting from a ball-like domain $D(0)\approx B^{2}(p_0)$ along a smooth deformation. Hereafter, ``$\approx$' denotes ``diffeomorphic to" throughout this paper.

\section{Quasi-Lipschitz domains} \label{S2}
\begin{defn} \label{D2.1}
A \emph{simple n-domain} is an open set in $\mathbb{R}^{n}$, diffeomorphic to an $n$-ball. A \emph{Lipschitz triple} $(U,\Gamma,V)$ in $\mathbb{R}^{n}$ is a simple n-domain $U$ in $\mathbb{R}^{n}$, attached with $\Gamma \subset \partial U$ and a simple (n-1)-domain $V$ in a hyperplane $H^{n-1}$ of $\mathbb{R}^{n}$, such that (see Figure~\ref{F2.1})
\begin{equation} \label{e2.1}
\begin{split}
  U
  &:= \{ (x,r) \in \mathbb{R}^{n} \mid u(x) < r < v(x), x \in V \subset H^{n-1}
    \}, \\
  \Gamma &:= \{ (x,u(x)) \in \mathbb{R}^{n} \mid x \in V \}, \\
  V
  &\subset H^{n-1}:
  = \{ (x,0) \in \mathbb{R}^{n} \mid x = (x^{1},x^{2},\ldots,x^{n-1}) \},
\end{split}
\end{equation}
where $(x,r)$ is a chosen coordinate of $U$ in $\mathbb{R}^{n}$, $u = u(x)$ is a Lipschitz function with Lipschitz constant $L_{0}$, i.e.,
\begin{equation} \label{e2.2}
  |u(x)-u(y)| \leq L_{0} \cdot |x-y|, \quad \forall\, x,y \in V,
\end{equation}
and $v(x)$ is a smooth function greater than $u(x)\,\forall\,x\, \in V$. We call $V$  the \emph{parameter set}.
\begin{figure}[!h]
\centering
\includegraphics[scale=0.5]{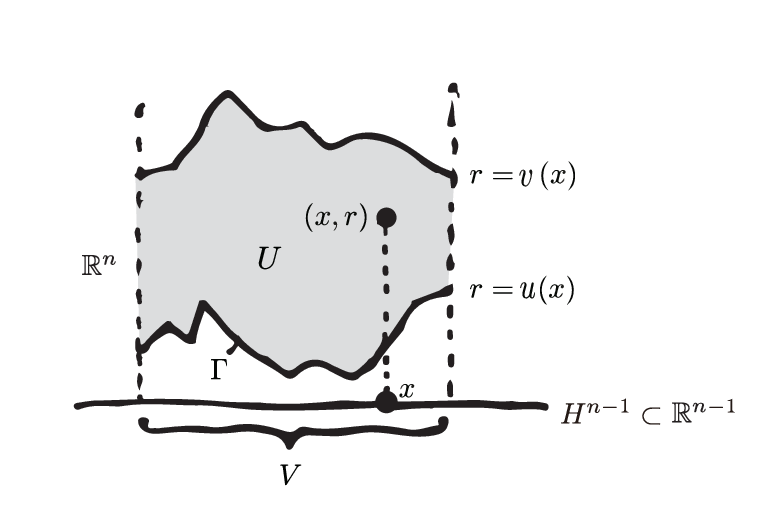}
\caption{Lipschitz triple in $\mathbb R^n$} \label{F2.1}
\end{figure}
\end{defn}
By \emph{a Lipschitz domain $D$ in $M^n$}, we mean that $D$ is a connected open set in $M^n$, and for each $p_0\in \partial D$ there is a coordinate neighborhood $(W,\psi)$ of $p_0$ in $M^n$, such that $(\psi(U),\psi(\Gamma),V))$ is a Lipschitz triple in $\mathbb{R}^{n}$, where $\psi:W\rightarrow \mathbb{R}^n$  is a coordinate map of $M^n$ around $p_0$,~ $U:=D\cap W,~ \Gamma:= \partial D\cap W$, and $V$ is a parameter set which is a simple (n-1)-domain in a hyperplane $H^{n-1}\subset\mathbb{R}^n$. We call $(U,\Gamma,V)$ \emph{a Lipschitz triple of domain $D$} in $M^n$.
 For $\Gamma'$ an open set of $\partial{D}$, $\Gamma'$ is said \emph{Lipschitz} if  every $p_0\in \Gamma'$ has a neighborhood $U$ which is in a Lipschitz triple of domain $D$ in $M^n$.

Identifying points in $W$ with $\psi(W)$, we may write $p=(x,r)$ for any $p\in U$. When $M^n$ is Riemannian, we can choose the coordinate neighborhood $(W,\psi)$ around $p\in M^n$, so that  each $r$-curve in $U$ of a Lipschitz triple $(U,\Gamma,V)$ is a geodesic in $M^n$.
This is possible by the standard arguments in elementary differential geometry. \\

 It is clear that every Lipschitz domain $D$ in $M^n$ has a finite family of Lipschitz triples, $(U_1,\Gamma_1, V_1), (U_2,\Gamma_2, V_2),\cdots,(U_h,\Gamma_h, V_h)$, with $\{\Gamma_j; j=1,2,\ldots,h \}$ covers $\partial D$.

\begin{defn} \label{D2.3}
Let $A$ and $B$ be two sets in $\mathbb{R}^{m}$ and $\mathbb{R}^{\ell}$ respectively, not both containing the origin $0$ of $\mathbb{R}^{m+\ell} := \mathbb{R}^{m} \times \mathbb{R}^{\ell}$, such that the normalization map $\nu \colon A \times B \to S^{m+\ell-1}(1)$, defined by $\nu(x,y) := \\
(x,y)/\! \sqrt{|x|^{2} + |y|^{2}}$ is injective. Here $S^{m+\ell-1}(1)$ is the unit sphere in $\mathbb{R}^{m+\ell}$. Let $m+\ell=n$. Define the cone with vertex at the origin $0$ of $\mathbb{R}^{m+\ell}$ and base $A \times B$ by
\begin{align}\label{e2.3}
  C_{0}(A \times B)
  := \{ t(x,y) \in \mathbb{R}^{m+\ell} \mid t \geq 0, \ x \in A, \ y \in B \}.
\end{align}

\end{defn} 
For example, we consider $C_{0}(S^{k} \times B^{n-k-1})$, where $B^n$ means the unit n-ball, $0 \notin A = S^k,\, 0 \in B=B^{n-k-1}$, and $S^{k} \subset \mathbb{R}^{k+1} = \mathbb{R}^{m},\, B^{n-k-1} \subset \mathbb{R}^{n-k-1}=\mathbb{R}^{\ell}$. Another useful example is 
$C_{0}(S^{k} \times S^{n-k-2})$, where $0\notin A= S^k \subset \mathbb{R}^{m}$, and $0 \notin B= S^{n-k-2} \subset \mathbb{R}^{\ell}$. Both have $m+\ell=n$. In particular, $C_{0}(S^{1} \times B^{1})$ and $C_{0}(S^{0} \times S^{1})$ in $\mathbb{R}^3$ will be considered later in Example~\ref{ExA} and Example~\ref{ExB} respectively.

\begin{defn} \label{D2.4}
A domain $G$ in $M^{n}$ is called \emph{diffeomorphic} under a map $\lambda$ into a cone $C_{0}(A \times B)$ \emph{relative to} $p \in G$, if (i)~$\lambda(p) = 0$, (ii)~$G$ is homeomorphic under $\lambda$ to a neighborhood $W$ of $0$ in $C_{0}(A \times B)$, (iii)~$G \setminus \{p\}$ is diffeomorphic to $W \setminus \{0\}$, and given any $(x,y) \in A \times B$, the curve $\lambda ^{-1} (t(x,y))$ is differentiable at $0$ with nonzero derivative, i.e.,

\begin{align}\label{e2.4}
  \hat{\nu} (x,y) :=\frac{d}{dt} \lambda^{-1}(t(x,y)) \Bigr|_{t=0} \neq 0.
\end{align}
\end{defn}

We are now ready to give  a precise definition of \emph{quasi-Lipschitz domains} in $M^{n}$. The construction of this definition is somewhat complicated. One may like to consult Example A given after Remark 2.4, while reading the definition.

\begin{figure}[!h]
\centering
\includegraphics[scale=0.5]{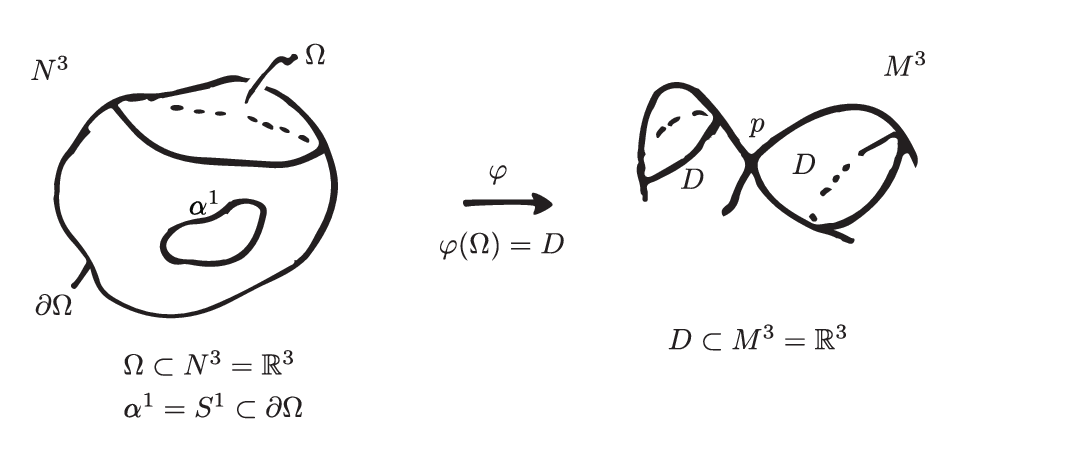}
\caption{gluing map ($n=3$)} \label{F2.2}
\end{figure}

\begin{defn} \label{D2.5}
Given a domain $D$ in $M^{n}$, with its compact closure $\overline{D}$ contained in $M^n$, i.e. $D\subset \subset M^n$. Consider a map
\begin{align}\label{e2.5}
  \varphi \colon \Omega \to D,\quad\text{(see Figure~\ref{F2.2})}
\end{align}
where $\varphi \in C^{\infty}(\Omega,M^{n}) \cap C^{0}(\overline{\Omega},M^{n})$, $\Omega$ is a domain in a smooth manifold $N^{n}$ with smooth boundary $\partial \Omega$, and $\Omega \subset \subset N^n$. If the following conditions are satisfied, $\varphi$ is called a \emph{gluing map} of $D$, and $D$ a \emph{quasi-Lipschitz domain} in $M^{n}$:
\begin{enumerate}
\item[(i)] $\varphi$ is a diffeomorphism between $\Omega$ and $D$. It can be extended continuously to a neighborhood of $\Omega$ in $N^n$, and $\partial D = \varphi(\partial \Omega)$.

\item[(ii)] There exist at most finitely many points $p_{1},\ldots,p_{\beta} \in \partial D$ (called \emph{glued points} of $\partial D$) such that each $\varphi^{-1}(p_{\ell})$ ($\ell = 1,\ldots,\beta$) is either (1) a $k$-dimensional smooth submanifold $\alpha^{k}$ of $\partial \Omega$ ( $0 < k < n-1$) which is connected and compact without boundary, or (2) a finite set $\alpha{^0}$ of points $q_{1},\ldots,q_{F}$ in $\partial \Omega$.  For example, $\alpha^{k}$ is a $k$-sphere $S^{k}$ lying in $\partial \Omega$ for $k\neq {0}$ (see Figures~\ref{F2.2},~\ref{F2.3}), and $\alpha^{0} = S^{0} = \{ q_{1}, q_{2} \}$. Note that $k$ depends on $p_{\ell}$. We call $\alpha^{k}$ the \emph{preglued set} of $p_{l}$.

\item[(iii)] $\Gamma'_{0} := \partial \Omega \setminus \varphi^{-1}(\{ p_{1},\ldots,p_{\beta} \})$ and $\Gamma_{0} := \partial D \setminus \{ p_{1},\ldots,p_{\beta} \}$ are homeomorphic under $\varphi$. Furthermore, $\Gamma_{0}$ is Lipschitz in the sense that $\forall p \in \Gamma_0$, there is a ``Lipschitz triple $(U,\Gamma, V)$ of the domain $D$" in $M^n$ (previously defined after Definition~\ref{D2.1}), where $p\in \Gamma = \Gamma_0$ locally.

\item[(iv)] Around a glued point $p \in \partial D$,the domain $D$ admits a \emph{sector domain} $\Lambda$ (see the following  \eqref{e2.6} and Figure~\ref{F2.05}). More precisely, letting $S^{k}$ and $B^{m}$ denote the k-sphere and m-ball respectively, there exists a small neighborhood $\widetilde{U}_{p}$ of $p$ in $M^{n}$ such that both the two maps
\begin{equation} \label{e2.6}
\begin{matrix}
  \lambda \colon & \Lambda~ := \widetilde{U}_{p} \cap D & \to
    & C_{0}(\alpha^{k} \times B^{n-1-k}), \\
  \lambda|_{\Lambda'} \colon
    & \Lambda' := \widetilde{U}_{p} \cap \partial D & \to
    & C_{0}(\alpha^{k} \times S^{n-2-k}),
\end{matrix}
\end{equation}
are diffeomorphic into the cones relative to $p$ with $\lambda (p) = 0$, in the sense of Definition~\ref{D2.4}, and given $x \in \alpha^{k}$, $y \neq y' \in S^{n-2-k}$,
\begin{align}\label{e2.7}
  \widehat{\nu}(x,y) \neq \widehat{\nu}(x,y'),
\end{align}
 where $\widehat{\nu}(x,y)$ is the unit vector tangent to the ray $\lambda^{-1}(t(x,y))$ at $t = 0$ with $t \geq 0$, $x \in \alpha^{k}$ and $y \in S^{n-2-k}$. The domain $\Lambda$ is called a \emph{sector domain} of $D$ around the glued point $p$. See Figures~\ref{F2.3}.
\end{enumerate}
\end{defn}
\begin{figure}[!h]
\centering
\includegraphics[scale=0.5]{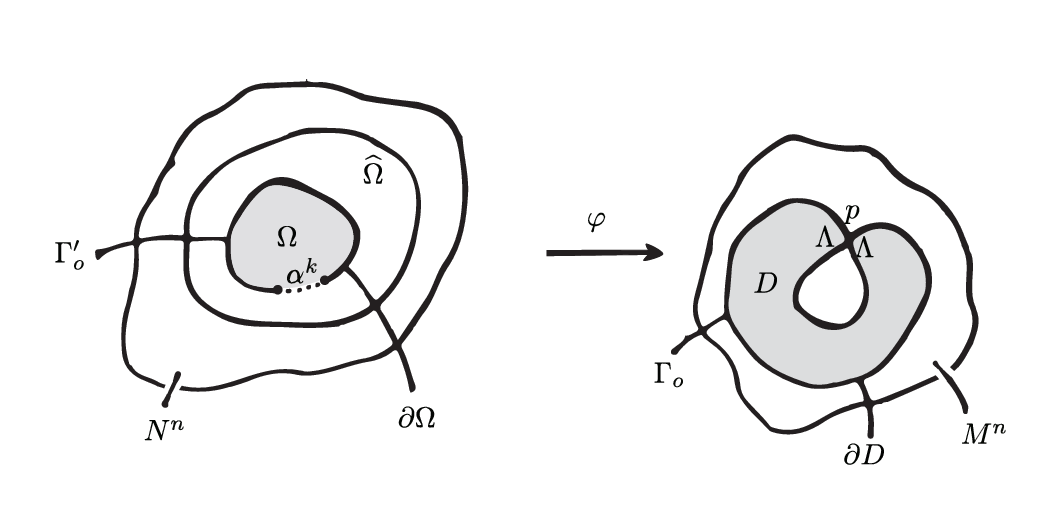}
\caption{a schematic illustration of a quasi-Lipschitz domain} \label{F2.3}
\end{figure}
\begin{defn} \label{D2.6}
For the gluing map $\varphi \colon \Omega \to D$ in Definition~\ref{D2.5}, we say that $\Omega$ is the \emph{model domain} of $D$ and $N^{n}$ is the \emph{model space}. The points $p_{1},\ldots,p_{\beta} \in \partial D$ in Definition~\ref{D2.5}(ii) are called the \emph{glued points} of $\partial D$.
\end{defn}

\begin{defn} \label{D2.7}
A domain $D$ in $M^{n}$ is called a \emph{quasi-Lipschitz domain} in $M^{n}$ if $D = \varphi(\Omega)$ for some gluing map $\varphi \colon \Omega \to D$, where $\Omega$ is the model domain in the model space $N^{n}$. The glued points of $\partial D$ are also called the \emph{singularities} of $\partial D$. See Figure~\ref{F2.3}.
\end{defn}

\begin{rmk} \label{R2.1}
We distinguish the quasi-Lipschitz domains in $M^{n}$ for the two cases of the preglued set $\alpha^{k}$ with $k = 0$ and $k \neq 0$ by the terms ``point-glued" and ``set-glued".
\end{rmk}

\begin{rmk} \label{R2.2}
We compare the terminologies in the two papers: In our previous paper \cite{H24}, we defined ``generalized Lipschitz domains" where the glued points of $\partial D$ are called ``joint points", and they are not necessarily finite and discrete as in this paper. However, no set-glued points are considered in \cite{H24}. In this paper, we focus especially on set-glued domains, in order to construct $C^{0}$-deformation of domains as required in the main theorems, i.e., Theorems~\ref{TA} and \ref{TB}.
\end{rmk}

\begin{defn}\label{D2.8}
(Properness) For a domain $D$ compactly contained in $M^{n}$, we say $D$ is proper, if $\partial D = \partial \overline{D} $.
\end{defn}

\begin{figure}[!h]
\centering
\includegraphics[scale=0.5]{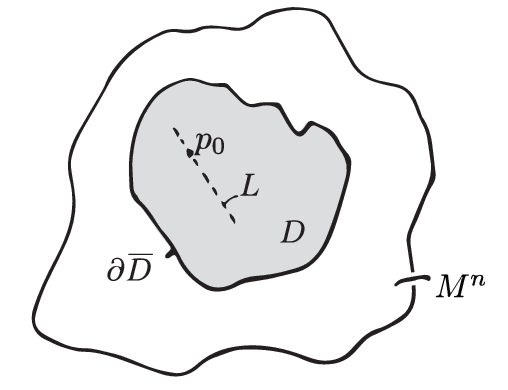}
\caption{$L\not\subset D$, $p_0\in\partial D\setminus\partial\bar{D}$; $D$ is not proper.} \label{F2.04}
\end{figure}

In Figure~\ref{F2.04}, the shaded area $D$ which is an open set of $M^n$ does not contain the line segment $L$ in its interior. Any point $p_0 \in L$ is included in $\partial{D}$. But $p_0 \notin \partial \overline {D}$.  Thus $p_0 \in \partial{D} \setminus \partial \overline{D}$, and $D$ is not proper.\\

\begin{prop} \label{p2.1}
 A quasi-Lipschitz domain is proper.
\begin{proof}
Step~1. It is easy to see that a domain $D$ in $M^n$ is proper, if and only if each boundary point $p \in \partial {D}$ is a limit point of the exterior $D_c := M^{n} \setminus \overline{D}$ of $D$. In fact, if there is $p_o \in \partial {D}$, such that $p_o$ has a neighborhood $N_{p_o}$ disjoint from $D_c$ , then $N_{p_o} \subset \overline {D}$, which means that $p_o \notin \partial \overline {D}$, and hence $\partial {D} \neq \partial \overline{D}$. Thus, D is not proper. On the other hand, if $D$ is proper, then every $p_o \in \partial{D}$ is in $\partial \overline{D}$, since $\partial D = \partial \overline{D} $. It means that $p_o$ is a limit point of $D_c$, and hence $D$ is proper.\\

Step~2. A Lipschitz domain $D$ is defined by having a Lipschitz triple around any $p \in \partial D$. Thus, every $p \in \partial D$ is clearly a limit point of $D_c$, and hence $D$ is proper. A quasi-Lipschitz domain $D$ is basically a Lipschitz domain with a set in $\partial D$ of dimension less than $n-1$ glued together in the manner defined in Definition~\ref{D2.5}. The gluing procedure maintains the property that every $p \in \partial D$ is a limit point of $D_c$. Therefore, a quasi-Lipschitz domain is still proper.
\end{proof}
\end{prop}

\begin{rmk} \label{R2.3}
Indeed, we can define quasi-Lipschitz domains in $M^{n}$, by allowing the preimage $\varphi^{-1}(p)$ of a glued point $p$ to consist of a finite union of disjoint $k_{i}$-dimensional smooth submanifolds of $\partial D$ with $0 < k_{i} < n-1$, instead of a single $\alpha^{k}$ as in Definition~\ref{D2.5}(ii). The main results of this paper carry over to this broader class of domains. However, to avoid the complication of language, we rather restrict $\varphi^{-1}(p)$ to be exactly one $\alpha^{k}$ when $k \neq 0$.
\end{rmk}

\begin{rmk} \label{R2.4}
In Figure~\ref{F2.05}(a) and Figure~\ref{F2.05}(b), the shaded area $D$ is a quasi-Lipschitz domain, but it is not the case in Figure~\ref{F2.05}(c), since $\Lambda$ in the figure is not a sector domain (see Definition~\ref{D2.5}(iv)).
\begin{figure}[!h]
\centering\includegraphics[scale=0.5]{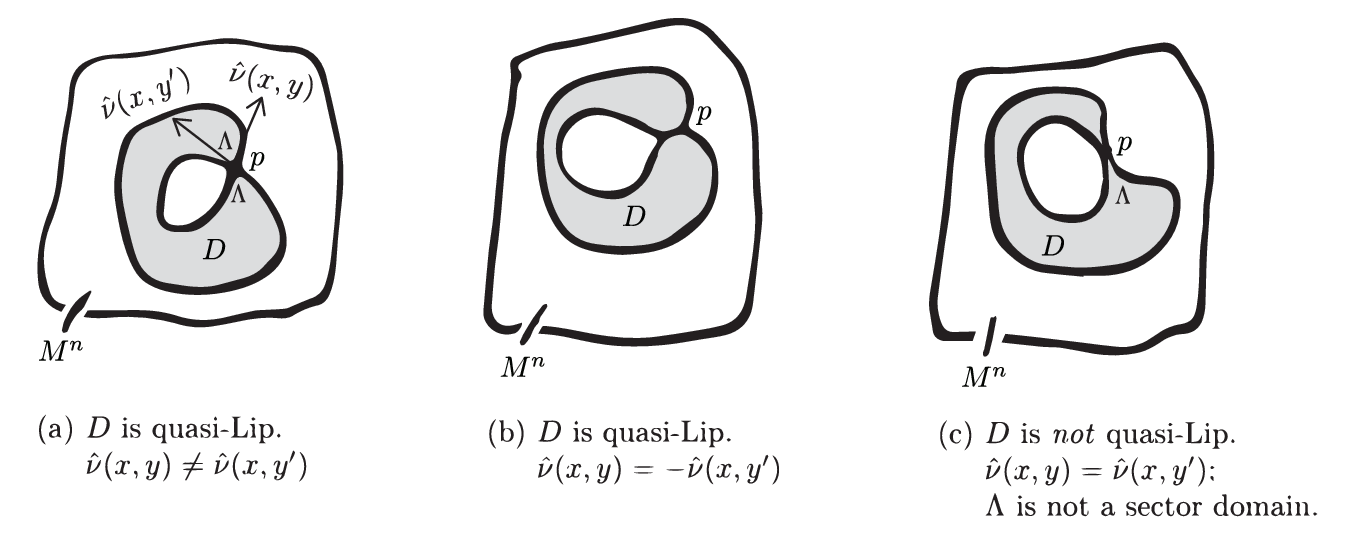}
\caption{examples of quasi-Lipschitz domains} \label{F2.05}
\end{figure}
\end{rmk}

We construct two typical examples to show the essential difference of set-glued and point-glued Lipschitz domains.

\begin{ex} \label{ExA}
Given $\Omega \approx D \approx$ a $3$-ball $B^{3}$. Let the preglued set $\alpha^{k} \approx S^{1} \subset \partial \Omega \approx S^{2}$, where $k = 1$, and the glued point $p \in \partial D \subset \overline{D} \subset M^{3} \approx \mathbb{R}^{3}$, with $\varphi^{-1}(p) = \alpha^{1}$. Here ``$\approx$" means ``diffeomorphic to". According to Definition~\ref{D2.5}, $\alpha^{1} \subset \partial \Omega$ \emph{cannot} be glued together by ``pinching" the circle $\alpha^{1} \approx S^{1}$ into a point $p$ in forming the domain $D$ as in Figure~\ref{F2.06}, since in that case $\Omega$ would not be homeomorphic to $D$ under the gluing map $\varphi$, violating Definition~\ref{D2.5}(i). In fact, the way to realize the quasi-Lipschitz domain $D$  is not to pinch the neck circle $\alpha^{1}$ along the ``interior" of $\Omega$. Instead, $\alpha^{1}$ should be shrunk and glued to a point $p$ along the ``exterior" of $\Omega$ in $N^{3}$.  
  There are various ways to see this kind of process illustrated in our $3$-space: One may  regard $\alpha^1$ as the edge of a dimple (as suggested by the referee of this paper), or regard it as the crater of a volcano.
\begin{figure}[!h]
\centering
\includegraphics[scale=0.6]{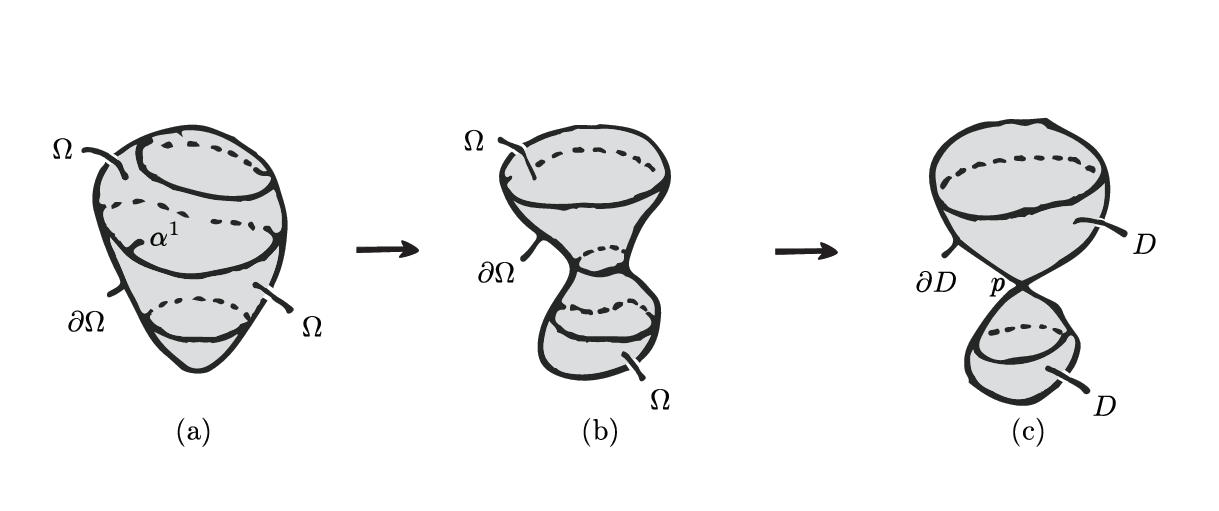}
\caption{Can't pinch preglued set $\alpha^1$ thru interior to a point} \label{F2.06}
\end{figure}

However, we try to visualize it employing ``inversion" which exchanges the interior of the $3$-ball $B^{3} \cong \Omega$ with its exterior, using the standard stereographic projection in the $4$-space. In other words, the domain $\Omega$ is now the exterior $B^{3}_{c}$ of the $3$-ball $B^{3}$ (subscript "c" means the “complement”), compactified with the point $p_{\infty}$ ``at infinity" (see the shaded area in Figure~\ref{F2.07}(a). Regard the preglued set $\alpha^{1}$ as a great circle $S^{1}$ of $S^{2} \cong \partial \Omega$ ($\approx \partial B^{3}_{c}$). The exterior $\Omega_{c}$ of $\Omega$ is now the inside ball $B^3$ bounded by $S^{2} \approx \partial \Omega$ (see the white area in Figure~\ref{F2.07}(a)). Pinch $\alpha^{1} = S^{1}$ into the origin $0$, which is now the glued point $p \in \partial D \subset \overline{D}$, if the model domain $\Omega$ is identified with the glued domain $D$ through $\varphi$, noting that $\Omega \approx D$ by Definition~\ref{D2.5}(i) (see the white area in Figures~\ref{F2.07}(b) and \ref{F2.07}(c)).
\begin{figure}[!h]
\centering
\includegraphics[scale=0.6]{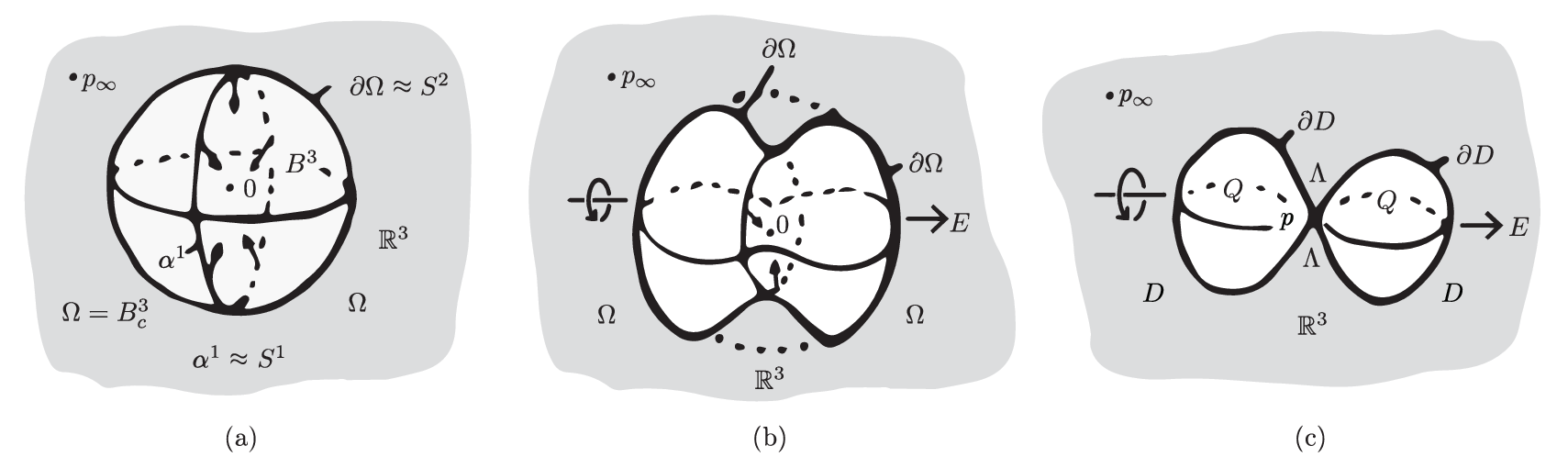}
\caption{Example A: set-glued Lipschitz domain $D$} \label{F2.07}
\end{figure}

The ``sector domain" $\Lambda$ of $D$ around the glued point $p$  has the shape of a domain like $C_{0}(\alpha^{1} \times B^{n-1-1}) = C_{0}(S^{1} \times I)$ (see \eqref{e2.6}), where $n-1-1 = 0$ and $B^{1} = I := (y, y')$, an interval with two end points $y$ and $y'$. It is obtained basically by rotating along $S^{1}$ the triangle $\sigma := C_{0}(x \times I) \subset \mathbb{R}^{2}$, $x \in \alpha^{1} \approx S^{1}$ (see Figure~\ref{F2.08}, where  $(y,y')$ is denoted by $(q_{1}, q_{2})$). Let $Q$ denote the domain obtained by rotating the planar area bounded by the digit "8" with the rotation axis $E$ in Figure~\ref{F2.07}(c). Then the required set-glued Lipschitz domain $D$ is $\mathbb{R}^{3}\cup\{p_{\infty}\}\setminus Q$, where the pregued set $\alpha^{k} = \alpha^{1} = S^{1}$ and the model domain $\Omega \approx$ $3$-ball $B^{3}$.

\begin{figure}[!h]
\centering\includegraphics[scale=0.5]{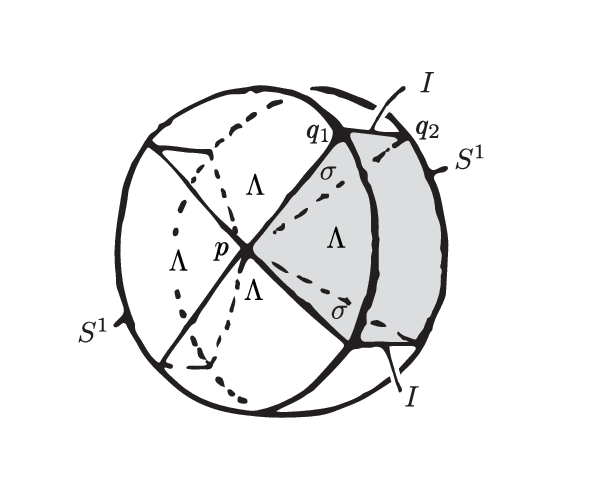}
\caption{sector domain $\Lambda\approx C_0(S^1\times I)$, obtained by rotating triangle $\sigma$ along $S^1$} \label{F2.08}
\end{figure}\end{ex}

\begin{ex} \label{ExB}
The second typical example is a point-glued Lipschitz domain $D$ with $k = 0$, $\alpha^{k} = \alpha^{0} = S^{0} =  \{ q'_{1}, q'_{2} \} \subset \partial \Omega$.  The required point-glued Lipschitz domain $D$ is given by pulling the two points $q'_{1},q'_{2} \in \partial \Omega$ in Figure~\ref{F2.09}(a) to the origin $0$ of the $3$-ball $B^{3}$, until the two points are glued together at $p \in \partial D$ . Here $B^{3}$ means the complement of~$\Omega$ in $N^{3} \approx \mathbb{R}^{3}$, by the inversion as in Example~\ref{ExA} (see Figures~\ref{F2.09}(a), \ref{F2.09}(b) and \ref{F2.09}(c)).
More precisely, if $\widetilde{Q}$ denotes the domain obtained by rotating the planar area bounded by the digit "8" with rotation axis $N$ in Figure~\ref{F2.09}(c), then $D$ is $\mathbb{R}^{3}\cup \{p_{\infty}\}\setminus{\widetilde{Q}}$. Here $k=0,n=3$, and the sector domain $\Lambda$ of $D$ around $p\in \partial D$  has the shape of a domain like $C_{0}(\alpha^{0} \times B^{n-1-0}) = C_{0}(S^{0} \times B^{2})$ (see \eqref{e2.6}), which consists of two circular 
cones joining their vertices at $p$. (See Figure~\ref{F2.09}(c).)

\begin{figure}[!h]
\centering
\includegraphics[scale=0.6]{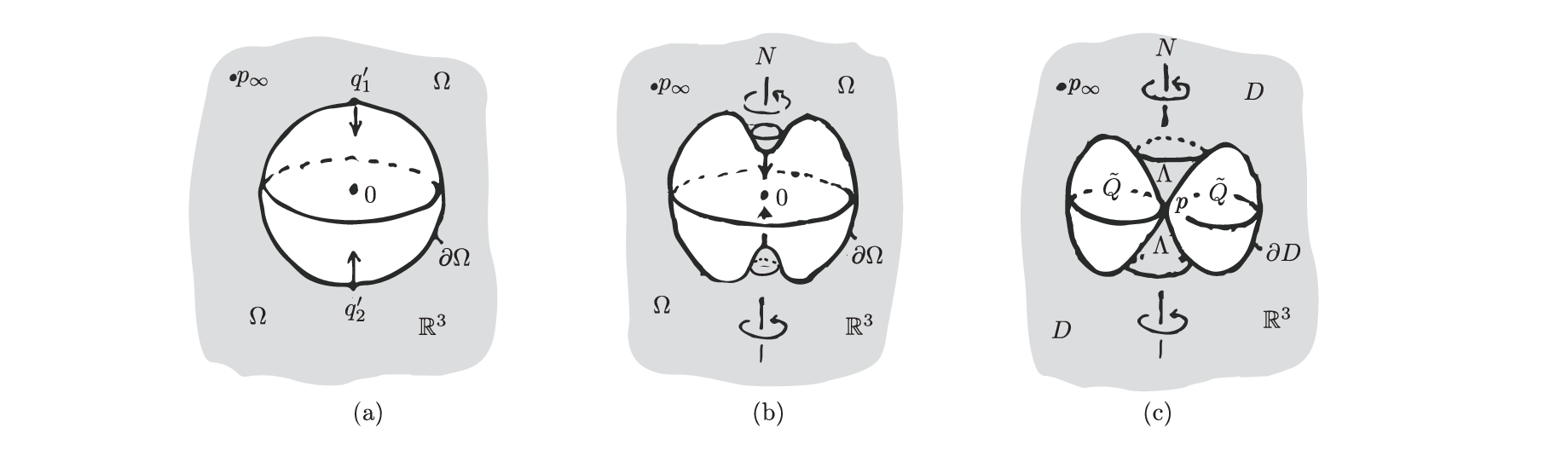}
\caption{Example~\ref{ExB}: point-glued Lipschitz domain $D$} \label{F2.09}
\end{figure}
\end{ex}

\begin{rmk} \label{R2.5}
By Proposition~\ref{p2.1}, any quasi-Lipschitz domain $D$ in $M^{n}$ is ``proper" , i.e., $\partial D = \partial \overline{D}$.  In Figure~\ref{F2.10}, the domain $D$ is not proper, since $p \in \partial D \setminus \partial \overline{D}$ is non-empty. The case of Figure~\ref{F2.10} may happen for a generalized Lipschitz domain (see \cite{H24}). However, for a quasi-Lipschitz domain $D \subset M^{n}$, the glued points are assumed finite and discrete in $\partial D$, and hence $D$ is always proper, as shown in Proposition~\ref{p2.1}.
\begin{figure}[!h]
\centering
\includegraphics[scale=0.5]{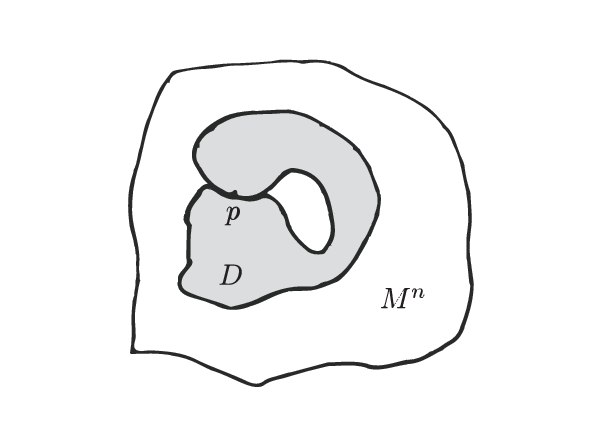}
\caption{non-properness; $D$ is not a quasi-Lipschitz domain} \label{F2.10}
\end{figure}
\end{rmk}

\section {Singularities} \label{S3}
Given a Sobolev function $g \in W^{1,2}(M^n)$.
By ``$g \equiv 0$ on a domain $G$ in $M^n$'', we mean that $g = 0$ almost everywhere on $G$, or equivalently, $\forall p \in G $, $\exists$ a neighborhood $N_p$ of $p$, such that $g=0$ almost everywhere on $N_p$. 

\begin{thm}[Trace theorem] \label{T3.1}
Given a quasi-Lipschitz domain $D \subset\subset M^{n}$ (i.e., the closure $\overline {D}\subset M^{n}$) and $g \in W^{1,2}(M^{n})$. If $g \equiv 0$ on $D_{c} := M^{n} \setminus \overline{D}$, then $g|_{D} \in H(D)$, and furthermore, $g|_{D}$ lies in the closure of $\mathcal{F}_{cpt}(D)$ in $W^{1,2}(D)$, where
\[
  \mathcal{F}_{cpt}(D)
  := \big\{ h \in C^{\infty}(D) \mid \overline{\operatorname{supp}h} \subset D
    \big\}
\]
(see also \eqref{e1.10}). In particular, $H(D)$ $= W^{1,2}_{0}(D)$, and the trace $Tg$ of $g$ vanishes on $\partial D$.
\end{thm}
Recall that a quasi-Lipschitz domain is proper. For a domain $D$ given as in Figure~\ref{F2.10} with $g\equiv 0$ on $D_c$, we note that $g$ may not be in the closure of $\mathcal{F}_{cpt}(D)$ in $W^{1,2}(D)$, since $D$ is not proper and neither a quasi-Lipschitz domain.

\begin{cor} \label{C3.1}
Given a quasi-Lipschitz domain $D \subset\subset M^{n}$, there exists no step function $g \in W^{1,2}(M^{n})$ with $g \equiv a$ in $D$ and $g \equiv b$ in $D^{c}$, $a \neq b$ being distinct constants.
\end{cor}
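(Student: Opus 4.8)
The plan is to argue by contradiction: I would use the Trace theorem (Theorem~\ref{T3.1}) to turn a putative jump of $g$ across $\partial D$ into the statement that a nonzero constant function has zero trace on $\partial D$, and then rule that out.

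Suppose such a $g \in W^{1,2}(M^{n})$ exists, with $g \equiv a$ on $D$ and $g \equiv b$ on $D^{c} = M^{n}\setminus D = D_{c}$, $a \neq b$. I would first reduce to a function in $W^{1,2}(M^{n})$ to which Theorem~\ref{T3.1} applies directly. Choose $\eta \in C^{\infty}(M^{n})$ with compact support and $\eta \equiv 1$ on a neighbourhood of $\overline{D}$, and set $h := \eta\,(g-b)$. Since $g \in W^{1,2}_{\mathrm{loc}}(M^{n})$ and $b$ is constant, $h \in W^{1,2}(M^{n})$; on $D$ we have $h \equiv a-b$, while on $D_{c}$ we have $g \equiv b$, so there $h = \eta\,(b-b) \equiv 0$. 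Thus $h$ satisfies the hypotheses of Theorem~\ref{T3.1}, which yields $h|_{D} \in W^{1,2}_{0}(D)$; that is, $a-b$, regarded as a constant function on $D$, lies in the closure of $\mathcal{F}_{cpt}(D)$ in $W^{1,2}(D)$.

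The decisive step is then to observe that a nonzero constant cannot belong to $W^{1,2}_{0}(D)$. The quickest route uses the trace operator $T\colon W^{1,2}(D)\to L^{2}(\partial D)$ developed in this section, for which $\ker T = W^{1,2}_{0}(D)$ (the content of the ``i.e.'' in Theorem~\ref{T3.1}) and which coincides with restriction to $\partial D$ on $C^{\infty}(D)\cap C^{0}(\overline{D})\cap W^{1,2}(D)$, hence on constants. Because the boundary of a quasi-Lipschitz domain is Lipschitz away from its finitely many glued points, it has positive surface measure $|\partial D|$, so $\|T(a-b)\|_{L^{2}(\partial D)}^{2} = (a-b)^{2}\,|\partial D| > 0$; this contradicts $a-b \in W^{1,2}_{0}(D) = \ker T$, and the corollary follows. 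One can also avoid $T$ entirely: extension by zero embeds $\mathcal{F}_{cpt}(D)$ isometrically into $W^{1,2}(M^{n})$, so $a-b\in W^{1,2}_{0}(D)$ would force the step function equal to $a-b$ on $D$ and to $0$ on $D_{c}$ to lie in $W^{1,2}(M^{n})$, whereas its distributional gradient is a nonzero measure carried by $\partial D$ — absurd.

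The only genuine obstacle is this last point — the absence of nonzero constants in $W^{1,2}_{0}(D)$ — everything else being bookkeeping. It reduces to the geometric fact that $|\partial D| > 0$ for a quasi-Lipschitz boundary, which is unaffected by the measure-zero set of glued points, together with the identification $W^{1,2}_{0}(D) = \ker T$ already packaged into Theorem~\ref{T3.1}. I expect no difficulty beyond spelling out, in the notation of Section~\ref{S3}, that the trace operator acts on functions continuous up to the boundary by restriction.
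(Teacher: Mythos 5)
Your derivation is precisely the one the paper intends: Corollary~\ref{C3.1} is stated without a separate proof as an immediate consequence of Theorem~\ref{T3.1}, and your reduction (subtract $b$, cut off, apply the trace theorem to conclude that the nonzero constant $a-b$ would lie in $W^{1,2}_{0}(D)$, then rule this out) is exactly that route. One caution: your first way of excluding nonzero constants reads more into Theorem~\ref{T3.1} than it asserts --- the paper neither constructs a bounded trace operator $T\colon W^{1,2}(D)\to L^{2}(\partial D)$ on quasi-Lipschitz domains nor proves $\ker T = W^{1,2}_{0}(D)$ --- but your second route (zero extension), or more simply the Friedrichs--Poincar\'e inequality applied to an approximating sequence in $\mathcal{F}_{cpt}(D)$, closes the argument, so the proof stands as essentially the paper's own.
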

In fact, the corollary is a local observation. There is no $g \in W^{1,2}(M^{n})$ having a jump on an open subset of $\partial{D}$.\\

To establish the trace theorem, we now enter the jungle of local estimates, and then use the partition of unity to patch together the local results, while treating carefully the behavior of the Sobolev function $g$ around the ``singularities" (i.e., the glued points).

\begin{rmk} \label{R3.1}
The trace theorem (Theorem~\ref{T3.1}) is valid for generalized Lipschitz domains defined in \cite{H24}, where the glued points of $\partial D$ (called ``joint points" in \cite{H24}) are not necessarily finite and discrete.  Note that in the definition of ``generalized Lipschitz domains" in \cite{H24}, we assume the set-continuity which is equivalent to the properness in the present paper.  
Indeed, Theorem~\ref{T3.2}, Corollaries~\ref{C3.1} and \ref{C3.2} are also valid for generalized Lipschitz domains. The trace theorem developed here will simplify the tedious arguments in \cite{H24} to obtain the Sobolev continuity there.
However, the preglued set $\alpha^{k}$ with $k\neq{0}$ is not allowed for generalized Lipschitz domains in \cite{H24}, and we have to include this difficult case in the trace theorem (Theorem~\ref{T3.1}).
\end{rmk}

\begin{lem}[Local estimates]\label{L3.1}
 Given a Lipschitz triple $(U,\Gamma,V)$ in $M^{n}$, and a Sobolev function $g \in W^{1,2}(U)$. Let $f_{k} \in C^{\infty}(U) \cap C^{0}(\overline{U} \cap \Gamma)$ ($k = 1,2,3,\ldots$) with $\int_{\Gamma} f_{k}^{2} \to 0$ as $k \to \infty$, and
\begin{equation} \label{e3.01}
  \|f_{k}-g\|_{W^{1,2}(U)} \to 0 \quad \textrm{as $k \to \infty$}.
\end{equation}
Denote
\[
  C^{\infty}_{\Gamma}(U)
  := \big\{ h \in C^{\infty}(U) \mid \overline{\operatorname{supp}h}
    \cap \Gamma = \varnothing \big\}.
\]
Then there exist $h_{k} \in C^{\infty}_{\Gamma}(U)$ such that
\begin{equation} \label{e3.02}
  \|f_{k}-h_{k}\|^{2}_{W^{1,2}(U)} \to 0 \quad \textrm{as $k \to \infty$},
\end{equation}
and therefore
\[
  \|h_{k}-g\|^{2}_{W^{1,2}(U)} \to 0 \quad \textrm{as $k \to \infty$}.
\]
\end{lem}
\begin{proof}
Step~1. Use the notations for $(U,\Gamma,V)$ in \eqref{e2.1} and \eqref{e2.2}. Let $\overline{\delta}$ be a positive constant, define the $\overline{\delta}$-tubular neighborhood
\[
  N_{\overline{\delta}}
  := \{ (x,r) \in U \mid x \in V, u(x) < r < u(x)+\overline{\delta} \}.
\]
Put $\overline{\delta} = 4\delta$. Consider $\overline{u} := u+2\delta$ and the smooth approximation function $w(x)$ of $\overline{u}(x)$ given by (see Figure~\ref{F3.1})
\[
  w(x)
  := \int_{V} \overline{u}(y) \cdot \varphi_{\alpha}(y-x) \, dy,
    \quad \forall\, x \in V,
\]
where $\alpha > 0$ is a small number, and
\[
  \varphi_{\alpha}(x)
  := \frac{1}{\alpha^{n-1}} \varphi\left( \frac{x}{\alpha} \right)
\]
with $\varphi \in C^{\infty}(H^{n-1})$ the standard mollifier. (We may freely expand $U$ and $V$ in order to make $w(x)$ well-defined around $\partial V$ in $H^{n-1} \subset \mathbb{R}^{n-1}$.) For $x \in V$, we see that
\[
  |w(x)-\overline{u}(x)|
  = \left| \int_{V} (\overline{u}(y)-\overline{u}(x)) \varphi_{\alpha}(y-x)
    \, dy \right|
  \leq L_{0} \cdot \alpha
  = \delta,
\]
by choosing $\alpha = \delta/L_{0}$, where $L_{0}$ is the Lipschitz constant for $u = u(x)$.
\begin{figure}[!h]
\centering
\includegraphics[scale=0.5]{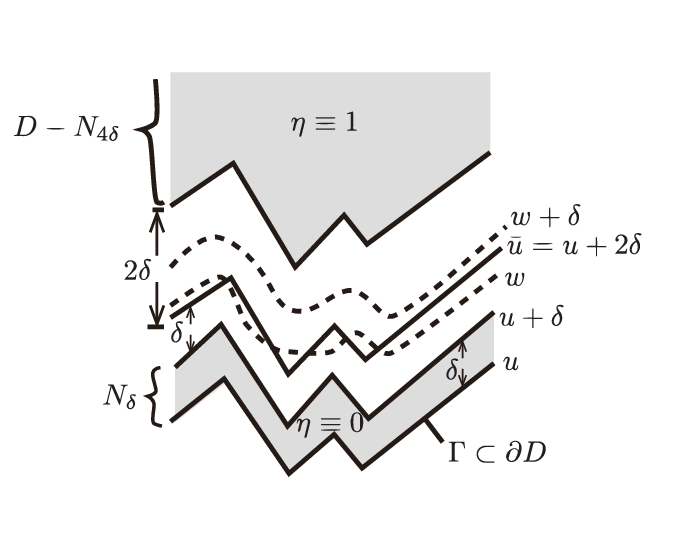}
\caption{local estimates} \label{F3.1}
\end{figure}

Step~2. Define $\eta \in C^{\infty}(U)$ by $\eta(x,r) := \eta_{0} \big( \frac{r-w(x)}{\delta} \big)$, where $\eta_{0} \in C^{\infty}(\mathbb{R})$ is a cut-off function with
\[
\begin{cases}
  \eta_{0}(s) = 0 &\textrm{for $s \leq 0$}, \\
  0 \leq \eta_{0}(s) \leq 1 &\textrm{for $0 \leq s \leq 1$}, \\
  \eta_{0}(s) = 1 &\textrm{for $s > 1$}.
\end{cases}
\]
Clearly, $\eta = 0$ on $N_{\delta}$ and $\eta = 1$ on $U \setminus N_{\overline{\delta}}$, recalling $\overline{\delta} = 4\delta$. Define
\begin{equation} \label{e3.03}
  h_{k} := f_{k} \cdot \eta \in C^{\infty}_{\Gamma}(U).
\end{equation}
We start to estimate the Sobolev norm of $f_{k}-h_{k}$:
\begin{equation} \label{e3.04}
  \int_{U} |f_{k}-h_{k}|^{2}
  = \int_{N_{\overline{\delta}}} |f_{k} (1-\eta)|^{2}
  \leq \int_{N_{\overline{\delta}}} f_{k}^{2}
  \leq C( \int_{N_{\overline{\delta}}} |Df_{k}|^{2})
    \cdot \delta^{2},
\end{equation}
in which the last inequality will be shown later by \eqref{e3.07} and Step 5;
\begin{equation} \label{e3.05}
\begin{split}
  \int_{U} |Df_{k}-Dh_{k}|^{2}
  &= \int_{N_{\overline{\delta}}} |Df_{k}(1-\eta) - f_{k}(D\eta)|^{2} \\
  &\leq C( \int_{N_{\overline{\delta}}} |Df_{k}|^{2} + \int_{N_{\overline{\delta}}} f_{k}^{2} |D\eta|^{2}),
\end{split}
\end{equation}
where $C$ in the formulas denotes positive constants for \emph{any} form, independent of $\delta$, $x$, $r$ and $f_{k},h_{k},\ldots$, such as $C = 2C = 2 = L_{0} = 1+L_{0}^{2} = \cdots$, etc. Sometimes, we use $C_{1},C_{2},\ldots$, etc., when it is necessary to distinguish them. Let the last term of \eqref{e3.05} be denoted by
\begin{equation} \label{e3.06}
  W := C\int_{N_{\overline{\delta}}} f_{k}^{2} \cdot |D\eta|^{2},
\end{equation}
which will be estimated later.\\

Step~3. $|D\eta|^{2} \leq C_{1}/\delta^{2}$ is to be established later in \eqref{e3.13}. On the other hand, we shall show later in Step 5 that
\begin{equation} \label{e3.07}
\begin{split}
  \int_{N_{\overline{\delta}}} f_{k}^{2}
  &\leq C( \int_{\Gamma} f_{k}^{2}) \cdot \delta
    + C( \int_{\Gamma} f_{k}^{2})^{1/2} \cdot P_{k}(\delta)^{1/2}
    \cdot \delta^{3/2}
    + C \cdot P_{k}(\delta) \cdot \frac{\delta^{2}}{2} \\
  &\leq C \cdot Q_{k}(\delta) \cdot \delta
    + C \cdot P_{k}(\delta) \cdot \delta^{2},
\end{split}
\end{equation}
where
\begin{align}
\label{e3.07a}
  Q_{k}(\delta)
  &:= \int_{\Gamma} f_{k}^{2} + ( \int_{\Gamma} f_{k}^{2} )^{1/2}
    \cdot P_{k}(\delta)^{1/2} \cdot \delta^{1/2}, \tag{3.7a}\\
\label{e3.07b}
  P_{k}(\delta)
  &:= \int_{N_{\overline{\delta}}} |Df_{k}|^{2}
  \leq \int_{N_{\overline{\delta}}} |Dg|^{2} + \alpha_{k}\tag{3.7b}
\end{align}
and $\alpha_{k} \to 0$ as $k \to \infty$ by \eqref{e3.01}.
Then we shall have
\begin{equation} \label{e3.08}
  W
  \leq C( \int_{N_{\overline{\delta}}} f_{k}^{2})
    \cdot \frac{C_{1}}{\delta^{2}}
  \leq C \cdot Q_{k}(\delta) \cdot \frac{1}{\delta} + CP_{k}(\delta).
\end{equation}
Combining \eqref{e3.04}, \eqref{e3.05}, \eqref{e3.06} and \eqref{e3.08}, we have
\begin{equation} \label{e3.09}
  \|f_{k}-h_{k}\|^{2}_{W^{1,2}(U)}
  \leq C_{1} \cdot Q_{k}(\delta) \cdot \frac{1}{\delta} + C_{2} P_{k}(\delta),
\end{equation}
by choosing a small $\delta = \delta_{0} < 1$.  This will prove \eqref{e3.02}, as required. More precisely, for any $\epsilon > 0$, choose $\delta = \delta_{0} < 1$ such that
\[
  \int_{N_{\overline{\delta}}} |Dg|^{2} < \frac{\epsilon}{4C_{2}}.
\]
Then selecting $k_{0}$ such that for $k > k_{0}$, $\alpha_{k} < \frac{\epsilon}{4C_{2}}$ is valid, where $\alpha_k$ is given in \eqref{e3.07b}, we obtain that $C_{2} P_{k}(\delta_{0}) < \frac{\epsilon}{2}$. By $\int_{\Gamma} f_{k}^{2} \to 0$ as $k \to \infty$, there exists $k_{1} > k_{0}$ such that for $k > k_{1}$,
\begin{equation} \label{e3.10}
  \frac{1}{\delta_0}Q_{k}(\delta_{0}) < \frac{\epsilon}{2C_{1}}.
\end{equation}
Hence for $\delta = \delta_{0}$, $k > k_{1}$, we obtain from \eqref{e3.09} that
\[
  \|f_{k}-h_{k}\|^{2}_{W^{1,2}(U)} < \epsilon.
\]
A \emph{key} point of the above estimation is that we \emph{squeeze} out $\delta^{2}$ of $\int_{N_{\overline{\delta}}} f_{k}^{2}$ to compensate the factor $1/\delta^{2}$ arising from the steepness of $|D\eta|^{2}$ as $\delta$ gets very small.\\

Step~4.  
We have
\begin{equation} \label{e3.11}
\begin{split}
  |D\eta|^{2}
  &\leq 2( |D_{x}\eta|^{2}
    + ( \frac{\partial \eta}{\partial r} )^{2}) \\
  &= 2[ \eta'_{0} ( \frac{r-w(x)}{\delta} )]^{2}
    \cdot [ ( \frac{-1}{\delta} |D_{x}w| )^{2}
    + ( \frac{1}{\delta} )^{2} ] \\
  &\leq 2~ \frac{C_{3}}{\delta^2}~ (1+|D_{x}w|^{2}),
\end{split}
\end{equation}
where $C_{3}$ is the bound of the term $|\eta'_{0}|^{2}$, a constant independent of $\delta$. In the first inequality of \eqref{e3.11}, the number~$2$ is attributed to the deviation of the Riemannian metric $g_{ij}$ of $M^{n}$ from the flat metric $(x,r)$ of $(U,\Gamma,V)$. Claim that
\begin{equation} \label{e3.12}
  |D_{x}w| \leq L_{0}.
\end{equation}
Given $x \in V$, for $z$ very close to $x$ in $V$, we have
\begin{align*}
  |w(z)-w(x)|
  &= \left| \int_{V} \overline{u}(y) \varphi_{\alpha}(y-z) \, dy
    - \int_{V} \overline{u}(y) \varphi_{\alpha}(y-x) \, dy \right| \\
  &= \left| \int_{B_{\alpha}(0)} \overline{u}(z+\zeta')
    \varphi_{\alpha}(\zeta') \, d\zeta'
    - \int_{B_{\alpha}(0)} \overline{u}(x+\zeta) \varphi_{\alpha}(\zeta)
    \, d\zeta \right| \\
  &\leq \int_{B_{\alpha}(0)}
    \big| \overline{u}(z+\zeta) - \overline{u}(x+\zeta) \big|~ \varphi_{\alpha}(\zeta) \, d\zeta
  \leq L_{0} \cdot |z-x|.
\end{align*}
Letting $z \to x$, we obtain \eqref{e3.12} and hence
\begin{equation} \label{e3.13}
  |D\eta|^{2}
  \leq 2C_{3}~ \frac{1+L_{0}^{2}}{\delta^{2}}
  = ~\frac{C}{\delta^{2}}
\end{equation}
by \eqref{e3.11}.\\

Step~5. Fix $k$, denote $f := f_{k}$ given in \eqref{e3.01}, in order to simplify the notations in the following computations. Claim \eqref{e3.07}: Note that $f = f(x,r)$, $x \in V$ and $u(x) < r < u(x)+\overline{\delta}$. Let $\overline{r} = r-u(x)$. But we still write $r$, replacing $\overline{r}$  for convenience. It is clear to see that
\begin{equation} \label{e3.14}
\begin{split}
  &\quad \int_{N_{\overline{\delta}}} f^{2} \, dr dx
  = \int_{V} ( \int_{0}^{\overline{\delta}} f(x,r)^{2} \, dr
    ) \, dx \\
  &= \int_{V} ( \int_{0}^{\overline{\delta}}
    [f(x,0) + \int_{0}^{r} f_{s}(x,s) \, ds]^{2}
    \, dr )^{2} \, dx \\
  &= \int_{V}( \int_{0}^{\overline{\delta}}
     [f(x,0)^{2} + 2f(x,0) \int_{0}^{r} f_{s}(x,s) \, ds
    + ( \int_{0}^{r} f_{s}(x,s) \, ds) ^{2}]
    \, dr \, dx,
\end{split}
\end{equation}
where $f_{s} := \frac{\partial f}{\partial s}(x,s)$. We simplify the computations by letting the area form = $dr \wedge dx$, disregarding the difference $\omega := dM - dr \wedge dx$.
Evidently, the simplification make sense, because we only  focus on the order of $\delta$, to which the deviation $\omega$ is negligible.

Denote the three terms in \eqref{e3.14} by $\alpha$, $\beta$ and $\gamma$, respectively. Then
\begin{gather*}
 |\alpha|
  = \int_{V} ( \int_{0}^{\overline{\delta}} (f(x,0))^{2} \, dr) \, dx
  = C( \int_{\Gamma} f^{2}) \cdot \overline{\delta}, \\
\begin{split}
  |\beta|
  &\leq 2\int_{V} |f(x,0)| ( \int_{0}^{\overline{\delta}}
    \left| \int_{0}^{r} f_{s}(x,s) \, ds \right| \, dr ) \, dx \\
  &\leq 2\int_{V} |f(x,0)| ( \int_{0}^{\overline{\delta}}
    ( \int_{0}^{r} f_{s}(x,s)^{2} \, ds)^{1/2}
   ( \int_{0}^{r} 1^{2} \, ds)^{1/2} \, dr ) \, dx \\
  &\leq 2\int_{V} |f(x,0)| ~( \int_{0}^{\overline{\delta}} |Df|^{2} \, ds
    )^{1/2}\cdot ~\frac{2}{3}~ \overline{\delta}^{3/2} \, dx \\
  &\leq \frac{4}{3}~ \overline{\delta}^{3/2}( \int_{V} f(x,0)^{2} \, dx
    \cdot \int_{V} \int_{0}^{\overline{\delta}} |Df|^{2} \, ds~ dx )^{1/2} \\
  &= C~\delta^{3/2} ( \int_{\Gamma} f^{2} )^{1/2}
    ( \int_{N_{\overline{\delta}}} |Df|^{2} )^{1/2}
  = C~\delta^{3/2} ( \int_{\Gamma} f^{2} )^{1/2} \cdot P_{k}(\delta)^{1/2},
\end{split} \\
\begin{split}
  |\gamma|
  &= \int_{V} \int_{0}^{\overline{\delta}} ( \int_{0}^{r}
    f_{s}(x,s) \, ds )^{2} \, dr dx
  \leq \int_{V} \int_{0}^{\overline{\delta}} ( \int_{0}^{r}
    |Df|^{2} \, ds )\, r \, dr dx \\
  &\leq \int_{V} ( \int_{0}^{\overline{\delta}} |Df|^{2} \, ds
    \cdot \int_{0}^{\overline{\delta}} r \, dr ) \, dx
=\, C\,\frac{\overline{\delta}^{2}}{2}~ P_{k}(\delta).
\end{split}
\end{gather*}
Thus we obtain \eqref{e3.07} and complete the proof of Lemma~\ref{L3.1}.
\end{proof}

We consider the simplest case of $n=1$ to review the essence of the above proof. The key point is in Step 3 of the proof that we estimated $\int_{N_\delta} f_{k}^2 = o(\delta^2)$ to compensates the order $c / \delta^2$ of $|D\eta|^2$. Write $f:=f_k$. Let $f(x)=x$, with D:=(0,1), then $\int_{N_\delta} f^2 =\int_{0}^{\delta} x^2 dx =  [x^{3}/3]_{0}^\delta = \delta^{3}/3$. Here, $f\in W^{1,2}(D)$, and the crucial term $W$ in \eqref{e3.06} satisfies
\begin{equation} \label{e3.15}
W:= C\int_{N_{\delta}}f^2 \,\cdot|D\eta|^2 = C \cdot \delta^{3}/3 \cdot C_{1}/\delta ^{2} = C_{2}\,\delta,
\end{equation}
which tends to zero, as $\delta$ tends to zero.

For a negative example, let $f(x)=\sqrt x$, $x\in(0,1)$, then $f$ is not a Sobolev function, since $f'(x)=1/\sqrt x$, and $\int_0^\delta\![f'(x)]^2\,dx=\infty$. (Recalling Remark~\ref{R1.1}).
Hence $f(x)$ does not meet \eqref{e3.01}, which requires in this case that $\int_0^\delta\!|f'(x)-Dg(x)|^2\,dx$ is arbitrarily small.

In order to figure out the intuitive meaning of the estimation that $\int_{N_\delta}\!f^2=o(\delta^2)$, one may expand $f(x)$ into
\[f(x)=f(0)+f'(0)x+E(x),\quad\text{with}~f(0)=0,\]
and $E(x)=o(x)$, to find that
\begin{align*}\int_0^\delta\![f(x)]^2\,dx&=\int_0^\delta\!(f'(0)x+E(x))^2\,dx\\&=[f'(0)]^2\int_0^\delta\!x^2\,dx+2f'(0)\int_0^\delta\!xE(x)\,dx+\int_0^\delta\![E(x)]^2\,dx\\&=O(\delta^3),\end{align*}
which is analogous to \eqref{e3.14} and \eqref{e3.07}.

\begin{thm}[Global version] \label{T3.2}
Given a quasi-Lipschitz domain $D$ in $M^{n}$, let $g$ be a Sobolev function in $H(D)$, i.e. $g$ satisfies the boundary $L^2$-vanishing condition defined by Definition~\ref{D1.2}. Then $g$ is in the closure of $\mathcal{F}_{cpt}(D)$ in $W^{1,2}(D)$ (see \eqref{e1.10}). In particular, the trace $Tg$ of $g$ on $\partial D$ vanishes, i.e. $H(D) = W^{1,2}_{0}(D)$.
 
\end{thm}

Recall some concepts: The hypothesis of $g\in H(D)$ means the existence of $f_{k} \in C^{\infty}(D) \cap C^{0}(\overline{D})$ ($k = 1,2,3,\ldots$) with $\int_{\partial D} f_{k}^{2} \to 0$ and $\|f_{k}-g\|_{W^{1,2}(D)} \to 0$ as $k \to \infty$. Given $g$ in $W^{1,2}(D)$, the \emph{trace} of $g$ on $\partial{D}$ is defined by the boundary value $u \in C^{0}(\partial{D})$, such that there exist $f_{k}\in C^{\infty}(D)\cap C^{0}(\overline{D})$ with $f_{k}|_{\partial{D}}=u$, and   $||f_{k}-g||^{2}_{W^{1,2}(D)}\rightarrow 0$, as $k\rightarrow \infty$. Particularly, $W^{1,2}_{0}(D)$  consists of functions in $W^{1,2}(D)$ with zero trace on $\partial {D}$. Namely, $W^{1,2}_{0}(D)$ is the closure of $\mathcal{F}_{0}(D) \cap W^{1,2}(D)$ in $W^{1,2}(D)$. \\

As a corollary of Theorem~\ref{T3.2}, we have the following closure theorem.

\begin{cor}[Closure theorem] \label{C3.2}
Given a quasi-Lipschitz domain $D$ in $M^{n}$,  
the Sobolev space $H(D)$ defined by Definition~\ref{D1.2} is identical with each of the closures of
\[
  \mathcal{F}_{cpt}(D), \quad
  \mathcal{F}(D), \,\,\textrm{and} \quad
  \mathcal{F}_{0}(D) \cap W^{1,2}(D) \quad 
\]
 in $W^{1,2}(D)$. 
\end{cor}


We shall complete the proofs of Theorem~\ref{T3.1} and \ref{T3.2} , by treating the behavior of a Sobolev function $g \in W^{1,2}(D)$ around the glued points of $\partial D$, which are the  singularities of the quasi-Lipschitz domains. The two theorems pave a way to establish the Sobolev continuity and eigenvalue continuity.\\

\renewcommand{\proofnamefont}{\bfseries}
\begin{proof}[Proof of the trace theorem (Theorem~\textup{\ref{T3.1}})]

Assume that the next Theorem~\ref{T3.2} is valid. For $g \in W^{1,2}(M^{n})$ with $g$ and $D$ given in Theorem~\ref{T3.1}, there exist $f_{k} \in C^{\infty}(M^{n})$, $k = 1,2,3,\ldots$, such that $\|f_{k}-g\|_{W^{1,2}(M^{n})} \to 0$ as $k \to \infty$. We claim that $\int_{\partial D} f_{k}^{2} \to 0$ as $k \to \infty$.\\

\emph{Step~$1$.} Given $p \in \partial D$ ($p$ may be a glued point of $\partial D$). We consider an $n$-ball $B_{p}$ in $M^{n}$, centered at $p$ and the set $G := B_{p} \cap D_{c}$, where $\partial G = \Gamma_{p} \cup K$, $\Gamma_{p} := \partial D \cap \partial G$, $K := \partial G \cap D_{c}$ (see Figure~\ref{F3.2}).
\begin{figure}[!htb]
\centering
\includegraphics[scale=0.5]{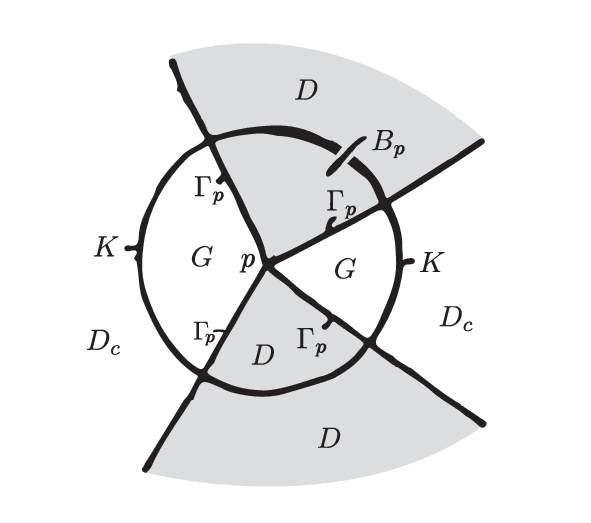}
\caption{to estimate $\int_{\partial D}f_k^2$} \label{F3.2}
\end{figure}

We use the method of moving frames to compute the relevant integrals. Let $\omega^{1},\omega^{2},\ldots,\omega^{n}$ be an orthonormal coframe in $G$, and $dA$ the area form of $\partial G$ (with slight smooth modification). The volume form $dM$ of $G$ is $dM = \omega^{1} \wedge \omega^{2} \wedge \cdots \wedge \omega^{n}$. Remark that
\begin{align*}
  dA
  &= a_{1}~ \widehat{\omega^1} \wedge \omega^{2} \wedge \cdots \wedge \omega^{n}
    + a_{2}~ \omega^{1} \wedge \widehat{\omega^2} \wedge \cdots \wedge \omega^{n}
    + \cdots \\
  &\quad + a_{n}~ \omega_{1} \wedge \omega^{2} \wedge \cdots \wedge \omega^{n-1}
    \wedge \widehat{\omega^n}, 
    \\
  df_{k}
  &= \partial_{i} f_{k}~ \omega^{i} \quad \textrm{(using summation convention)},
\end{align*}
where $``~\widehat{**}~"$ means~ ``absence".\\

\emph{Step~$2$.} We have
\begin{align*}
  \int_{\Gamma_{p}} f_{k}^{2} \, dA
  &\leq \int_{\Gamma_{p} \cup K} f_{k}^{2} \, dA
  = \int_{\partial G} f_{k}^{2} \, dA
  = \int_{G} d(f_{k}^{2} \, dA) \quad \textrm{(by Stoke's theorem)} \\
  &= \int_{G} df_{k}^{2} \wedge dA + \int_{G} f_{k}^{2} \, ddA
  = \int_{G} \big[ \partial_{1} (f_{k}^{2}) \omega^{1} + \cdots \big] \wedge dA
    + \int_{G} f_{k}^{2} \, ddA \\
  &= \int_{G} \big[ 2f_{k} (\partial_{1} f_{k}) a_{1} + \cdots \big] \omega^{1}
    \wedge \cdots \wedge \omega^{n}
    + \int_{G} f_{k}^{2} (\sum _{i} a_{ii}) \omega^{1} \wedge \cdots \wedge \omega^{n} \\
  &\leq C_1\left( \int_{G} f_{k}^{2} \right)^{1/2}
    \left( \int_{G} |Df_{k}|^{2} \right)^{1/2} \, dM
    + C_2\int_{G} f_{k}^{2} \, dM \\
  &\leq~ C\|f_{k}\|^{2}_{W^{1,2}(G)}
  \leq~ C\|f_{k}-g\|^{2}_{W^{1,2}(G)}
    \quad \textrm{(since $g \equiv 0$ in $G \subset D_{c}$)} \\
  &\to 0 \quad \textrm{as $k \to \infty$}.
\end{align*}
Applying Theorem~\ref{T3.2} to the given $g \in W^{1,2}(M^{n})$, we see that the trace $Tg$ of $g$ on $\partial D$ vanishes, and the trace theorem is established.
\end{proof}

\begin{proof}[Proof of Theorem~\textup{\ref{T3.2}}]
\emph{Step~$1$.} Let $g\in H(D)$ and $f_k\in C^\infty(D)\cap C^0(\overline D)$ be given in Definiton~\ref{D1.2}.
Around each glued point $p\in\partial D$, either point-glued or set-glued, we consider a
neighborhood $U_p$ of $p$ in $D$, and let
\[r=r(x):=\text{dist}_{M^n}(x,p),\, \text{with}~ r(p)=0;\]
\begin{equation}\label{e3.16}
U_p\supset\supset U_{4\delta}:=\{x\in D: 0<r(x)<4\delta\},
\end{equation}
where $\delta>0$ is a small number to be determined later.
Remark that the neighborhood $U_p$ could be of the shape like the Figure~\ref{F2.08} in Example~\ref{ExA}. Locally around $p$, we identify $U_p$ with an open neighborhood in
$C^0(\alpha^\ell\times B^{n-1-\ell})$ through the map $\lambda$ in \eqref{e2.6} of
Definition~\ref{D2.5}. Parametrize $x\in U_p$ by $x=(r,\xi)$, where $r>0$ and $\xi\in$ A:= an open set in $\alpha^\ell\times S^{n-1-\ell}$. Define on $U_p$ the cut-off function $\eta\in C^\infty(U_p)\cap C^0(\overline U_p)$ by
\begin{equation}\label{e3.17}
\eta(x):=
\begin{cases}0, &\text{for}~0\le r(x)\le\delta,\\1, &\text{for}~4\delta\le r(x),
\end{cases}
\end{equation}
where $\eta$ is increasing in $r$ and $|D\eta|<\frac{1}{6\delta}$ on $U_p$.
Given $k>0$, we define
\begin{equation}\label{e3.18}
\widehat h_k:=f_k\,\eta
\in C^\infty(U_p)\cap C^0(\overline U_p).
\end{equation}
Evidently, $\widehat h_k\equiv 0$ on $U_\delta:=\{x\in D:0<r(x)<\delta\}$, and $\overline{\text{supp}~\widehat h_k}\cap\Gamma_\delta=\varnothing$, where $\Gamma_\delta:=\overline U_\delta\cap\partial D=\{x\in\partial D:0\le r(x)\le\delta\}$.
\\

\emph{Step~$2$.}
Given $\varepsilon>0$, we claim that $\|f_k-\widehat h_k\|^2_{W^{1,2}(U_{4\delta})}<\frac{\varepsilon}{3\beta}$, $\beta$ being the number of glued points of $\partial D$ for $\delta=\text{some}~\delta_0>0$ and $k>\text{some}~k_0>0$ (see (iii) in Definition~\ref{D2.5}).
The proof is similar to the proof of Lemma~\ref{L3.1}, except now the $r$-curves are radiating from the glued point $p\in\partial D$, at which $r(p)=0$.

(i) To estimate $\|f_k-\widehat h_k\|^2_{W^{1,2}(U_{4\delta})}
<\frac{\varepsilon}{3\beta}$, we see that
\begin{equation}
\label{e3.19}
\int_{U_{4\delta}} |f_k-\widehat h_k|^2
\le \int_{U_{4\delta}} f_k^2
=\int_{U_{4\delta}} g^2+\alpha_k,
\end{equation}
and
\begin{align}
\label{e3.20}
\int_{U_{4\delta}} |Df_k-D\widehat h_k|^2
&\le C\left(\int_{U_{4\delta}} |Df_k|^2+W\right) \notag\\
&=C\left(\int_{U_{4\delta}} |Dg|^2+\beta_k\right)+CW,
\end{align}
where $\alpha_k\to0$, $\beta_k\to0$ as $k\to\infty$, and
\begin{align}\label{e3.21}
W:=C\int_{U_{4\delta}} f_k^2\,\,|D\eta|^2<\frac\varepsilon{9\beta}\quad\text{on}~U_{4\delta}.
\end{align}
Remark that \eqref{e3.19} and \eqref{e3.20} are based on the computations in \eqref{e3.04} and \eqref{e3.05}.

(ii) The equality in \eqref{e3.19} is elementary. It derives simply from Cauchy-Schwarz inequality.  However, we will compute it explicitly as follows:
\[
\int_{U_{4\delta}} f_k^2
=\int_{U_{4\delta}} g^2+(f_k^2-g^2)
=\int_{U_{4\delta}} g^2+\alpha_k,
\]
where
\begin{align}
|\alpha_k|
&=\left|\int_{U_{4\delta}}(f_k-g)(f_k+g)\right| \notag\\
&=\left|\int_{U_{4\delta}}(f_k-g)\bigl(2g+(f_k-g)\bigr)\right| \notag\\
&\le
2\left(\int_{U_{4\delta}} |f_k-g|^2
\int_{U_{4\delta}} g^2\right)^{1/2}
+\int_{U_{4\delta}} |f_k-g|^2 \notag\\
&=\left(\int_{U_{4\delta}} |f_k-g|^2\right)^{1/2}
\left[
2\left(\int_{U_{4\delta}} g^2\right)^{1/2}
+\left(\int_{U_{4\delta}} |f_k-g|^2\right)^{1/2}
\right]
\longrightarrow 0\label{e3.22}
\end{align}
as $k\to\infty$. Similarly, $\int_{U_{4\delta}} |Df_k|^2=\int_{U_{4\delta}}|Dg|^2+\beta_k$, with $\beta_k\to0$ as $k\to\infty$.
\\

(iii) It is not difficult to see that $|D\eta|^2\le C_1/\delta^2$,
which is similar to the proof of \eqref{e3.13}. Furthermore,
\begin{align}
W
&:=C\int_{U_{4\delta}} f_k^2\,\,|D\eta|^2
\le C\left(\int_{U_{4\delta}} f_k^2\right)\frac{C_1}{\delta^2} \notag\\
&=C\left[\left(\int_{U_{4\delta}} g^2\right)+\alpha_k\right]
\frac{C_1}{\delta^2} \notag\\
&=C\bigl(C_2\cdot\text{vol}(U_{4\delta})+\alpha_k\bigr)\frac{C_1}{\delta^2} \notag\\
&=C_3\delta^{n-2}+CC_1\alpha_k\cdot\frac{1}{\delta^2}.\label{e3.23}
\end{align}
Suppose $n:=\dim M^n>2$.
We will see \eqref{e3.21} immediately: For the given $\varepsilon>0$, we can find
$\delta_0>0$ such that $C_3\delta^{n-2}<\frac{\varepsilon}{18\beta}$, for $0<\delta\le\delta_0$. 
Then choose $k_0>0$ such that
\begin{equation}\label{e3.24}
CC_1\alpha_k\cdot\frac{1}{\delta_0^2}<\frac{\varepsilon}{18\beta},
\quad\text{for}\,\, k>k_0.
\end{equation}
Thus there exist $\delta_0>0$ and $k_0>0$ such that
\begin{equation}\label{e3.25}
W\le \frac{\varepsilon}{9\beta}
\quad\text{on }U_{4\delta_0}\,,\quad\text{for}\,\, k>k_0.
\end{equation}

(iv) Remark that both $\int_{U_{4\delta}} g^2$ and $\int_{U_{4\delta}} |Dg|^2$ are of the order $O(\delta^n)$. The argument in (iii) can be applied to the two corresponding terms in \eqref{e3.19} and \eqref{e3.20} to find the common $\delta_0>0$ and $k_0>0$, such that these terms in the right hand sides of \eqref{e3.19} and \eqref{e3.20} are less than $\frac\varepsilon{9\beta}$.
Now define globally a function $\widetilde f_k\in C^\infty(D)\cap C^0(\overline D)$ such that $\widetilde f_k:=\widehat h_k$ on each $U_p$, where
$p\in\partial D$ is a glued point, and $\widetilde f_k:=f_k$ elsewhere in $D$.
Then for $n:=\dim M^n>2$, we have
\begin{equation}
\label{e3.26}
\|f_k-\widetilde f_k\|^2_{W^{1,2}(D)}<\frac{\varepsilon}{3},
\end{equation}
and $\overline{\text{supp}~\widetilde f_k}\cap\Gamma_{\delta_0}=\varnothing$, around each glued point $p\in\partial D$.
\\

(v) As for $n=2$, the proof of \eqref{e3.26} is reduced to Lemma~\ref{L3.1}.
In fact, consider a neighborhood $U_p$ of a glued point $p\in\partial D$.
The boundary segment $\Gamma:=\overline U_p\cap\partial D=\Gamma_1\cup\cdots\cup\Gamma_F$
(see (2) of (ii) in Definition~\ref{D2.5}), where $\Gamma_i\cap\Gamma_j=\{p\}$ ,$\forall\,i\ne j$.
Furthermore, each $\Gamma_i$ is a simple Lipschitz line, which is
provided by requirement (v) of Definition~\ref{D2.5} concerning the sector
domains. Hence there are Lipschitz triples $(U_i,\Gamma_i,V_i)$ for
which Lemma~\ref{L3.1} can be applied to obtain $\widehat h_k$ approximating
$f_k$ in $W^{1,2}(U_i)$. Define a global
$\widetilde f_k\in C^\infty(D)\cap C^0(\overline D)$ as in (iv) of this
step, i.e., Step 2. We obtain \eqref{e3.26}.
\\

\emph{Step~$3$.} Consider $\Gamma'
:=\bigcup\{\Gamma_{\delta_0}: p \text{ is a glued point on }\partial D\}$, where $\delta_0>0$ is the common small number chosen for all glued points $p\in\partial D$, and $\Gamma_{\delta_0}:=\overline U_{\delta_0}\cap\partial D$.
Then $\Gamma'$ is a compact Lipschitz set contained in $\partial D$.
There exists a finite cover $\Gamma'_j$, $j=1,2,\ldots,\mu$, with the corresponding Lipschitz triples $(U_j',\Gamma_j',V_j')$.
Remark that $\int_{\partial D}\widetilde f_k^{\,2}\longrightarrow0$ as $k\to\infty$.
We can apply Lemma~\ref{L3.1} to these triples to obtain
$h'_j\in C^\infty(U'_j)\cap C^0(\overline U'_j)$ with $\overline{\text{supp}~h_j'}\cap\Gamma_j'=\varnothing$, such that $h_j'$ is sufficiently close to $\widetilde f_k$ in
$W^{1,2}(U'_j)$.
Use the standard partition of unity (see Section 3.4 in \cite{H24} for details) to patch together the local approximations  $\{h_j'~\text{on}~U'_j\,;\,j=1,2,\ldots,\mu\}$, and $\widetilde f_k$ on $G_0:=D\setminus\bigcup\{U'_j:j=1,2,\ldots,\mu\}$ to obtain $h_k\in C^\infty(D)\cap C^0(\overline D)$, such that $\text{supp}~h_k\subset\subset D$ and
\begin{equation}
\label{e3.27}
\|\widetilde f_k-h_k\|^2_{W^{1,2}(D)}<\frac{\varepsilon}{3}.
\end{equation}
Finally, we choose $k_0>0$, such that for $k>k_0$,
\begin{equation}\label{e3.28}
\|f_k-g\|^2_{W^{1,2}(D)}<\frac{\varepsilon}{3},
\end{equation}
by the assumption of Theorem~\ref{T3.2}.
Combining \eqref{e3.26}, \eqref{e3.27}, and \eqref{e3.28}, we
have
\[\|g-h_k\|^2_{W^{1,2}(D)}<\varepsilon,\]
where $h_k\in\mathcal F_{\mathrm{cpt}}(D)$ and $k$ sufficiently large. In other words, $g\in W^{1,2}_0(D)$.
This completes the proof.
\end{proof}

\renewcommand{\proofnamefont}{\itshape}

\begin{rmk}In the above Step 3 (of the proof of Theorem~\ref{T3.2}, we use the partition of unity to patch together the functions $\{h_j'~\text{on}~U_j'\,;\,j=1,2,\dots,\mu\}$ and $\{\tilde f_k~\text{on}~G_0\}$, where $G_0:=D\setminus(U_1'\cup U_2'\cup\cdots\cup U_\mu')$. Let us explain this construction in more detail as follows. Let the shrunk open set $U_j'$ be denoted by $G_j$ so that $\{G_0,G_1,\dots,G_\mu\}$ covers $D$. Let $\{\varphi_0,\varphi_1,\dots,\varphi_\mu\}$ be the partition of unity on $G_j$, $j=0,1,\dots,\mu$, i.e., 
\[1=\varphi_0+ \varphi_1+\cdots+\varphi_\mu,\]
where $\overline{supp\, \varphi_{j}} \subset G_j$. Write $\tilde f:=\tilde f_k$ on $G_0$. Consider the integrated function
\[h_k:=\varphi_0\tilde f+(\varphi_1h_1'+\varphi_2h_2'+\cdots+\varphi_\mu h_\mu').\]
Then
\begin{align*}\|\tilde f-h_k\|_{W^{1,2}(D)}^2&=\sum_{j=1}^\mu\int_{U_j'}\!\varphi_j^2(\tilde f-h_j')^2+\varphi_j^2(D\tilde f-Dh_j')^2+|D\varphi_j|^2(\tilde f-h_j')^2\\&\leq C\sum_{j=1}^\mu\left(\|\tilde f-h_j'\|_{W^{1,2}(D)}^2+\int_{U_j'}\!|D\varphi_j|^2(\tilde f-h_j')^2\right)\\&=O(\varepsilon)\end{align*}
for suitable $\delta_0>0$ with $k>k_0>0$, since $\varphi_j^2\leq1$ and $|D\varphi_j|^2\leq\frac C{\delta^2}$. The last inequality explains in detail how \eqref{e3.27} is obtained.

\end{rmk}

\section{Continuity theorems}

Given a quasi-Lipschitz domain $D\subset\subset M^n$, we have defined the space $H(D)$ as the space of Sobolev functions in $W^{1,2}(D)$, which satisfy the boundary $L^2$ vanishing condition.  In Corollary~\ref{C3.2}, we have proved the closure theorem, and established that $H(D)$ coincides with the closure of $\mathcal{F}_{cpt}(D)$ in $W^{1,2}(D)$.\\ 

When $W^{1,2}(M^n)$ is mentioned in the following, we assume that $M^n$ itself has compact boundary without loss of generality, since the concerned domains $D$ in $M^n$ have compact closures, and $M^n$ can be restricted to a smaller part while maintaining $D\subset\subset M^n$. 
  
\begin{prop}\label{p4.1}
 Let $g\in W^{1,2}(D)$, then  $g\in H(D)$ if and only if $g$ can be extended to $\tilde{g} \in W^{1,2}(M^n)$ with $\tilde{g}\equiv 0$ on $D_{c}:= M^n \setminus \overline{D}$.
\begin{proof}
 (i)\,To prove the “if part", we choose $f_k \in C^{\infty}(M^n)$ with  $ \| \tilde{g}-f_k\|^{2}_{W^{1,2}(M^n)} \rightarrow 0$, as $k \rightarrow \infty$.
Using the arguments in Step 1 of the proof of Theorem~\ref{T3.1}, we have $\int_{\partial{D}}f_k^{2} \rightarrow 0$, i.e. $g \in H(D)$, where $g$ is the restriction of $\tilde{g}$ into $D$. \\

(ii)\,On the other hand, suppose that $g \in H(D)$. We extend it to $\tilde {g} \in L^{2}(M^n)$ by letting $\tilde{g}\equiv 0$ on $D_{c}$. From Theorem~\ref{T3.1}, $g$ is in the closure of $\mathcal{F}_{cpt}(D)$ in $W^{1,2}(D)$, i.e.$\forall \varepsilon >0$ there exists $f\in \mathcal{F}_{cpt}(D)$ such that $\|f-g\|^{2}_{W^{1,2}(D)}< \varepsilon$. Extend $f$ to $\tilde{f} \in C^{\infty}(M^n)$ by letting $\tilde{f}\equiv 0$ in $D_c$. Then $\|\tilde{f}- \tilde{g}\|^{2}_{W^{1,2}(M^n)}< \varepsilon$. Thus $\tilde{g} \in W^{1,2}(M^n)$.
   
\end{proof}
\end{prop}

Based on this equivalence, we \emph{identify} $g$ with its extension $\tilde g $, for convenience. Namely, a Sobolev function of $H(D)$ is identified with the one of $W^{1,2}(M^n)$ with zero value on $M^n \setminus \overline{D}$.

\begin{thm}[Sobolev Continuity] \label{T4.1}
Given a monotone $C^{0}$-deformation
\[
  \mathcal{D} := \{ D(t) \subset M^{n} \mid 0 \leq t \leq b \}
\]
on $M^{n}$, defined by Definition~\textup{\ref{D1.1}}. Denote the Sobolev space $H_{t} := H(D(t))$ (Definition~\ref{D1.2}). Then $\mathcal{D}$ satisfies the Sobolev continuity
\begin{equation}\label{e4.1}\overline{\bigcup_{s<t} H_{s}} = H_{t} = \bigcap_{r>t} H_{r}.\end{equation}

\end{thm}

Note that the sobolev continuity \eqref{e4.1} is also valid for any of the three Sobolev spaces which are the closures of $\mathcal{F}_{cpt}(D(t)),
  \mathcal{F}(D(t))\,\textrm{and}\,
  \mathcal{F}_{0}(D(t)) \cap W^{1,2}(D(t))$ in $W^{1,2}(D(t))$, since they are all identical with $H(D(t))$.

\begin{proof} 
Step 1. Claim that $H_t=\bigcap\limits_{r>t}H_r$ for $t\in [0,b]$. The inclusion ``$\subset$'' is obvious, since $H_t\subset H_r$ for $r>t$.
We show the converse ``$\supset$''  as follows.

 \item[(i)] Given $g\in\bigcap\limits_{r>t}H_r$, we claim that $g\equiv0$ on $D(t)_c:=M^n\setminus\overline{D(t)}$, where $D(t)_c$ means the set of ``exterior points'' of $D(t)$ in $M^n$. Then by the equivalence of Proposition~\ref{p4.1}, we can identify $g\in H_{r}$ as
$g\in W^{1,2}(M^n)$ with $g\equiv 0$ on $D(r)_c:= M^n \setminus{\overline {D(r)}}$.
\item[(ii)]  For $V$ an open set in $D(t)_c$, with $\overline V\cap D(t)=\varnothing$, there exists $r'>t$ such that $\overline V\cap \overline{D(r')}=\varnothing$. Suppose not, there exists $r_n\searrow t$ and points $x_n\in \overline V\cap D(r_n)$.
Consider a convergent subsequence, still denoted by $\{x_n\}$, which tends to $x_0\in\overline{V}$.
Given any $r>t$, there exists $n_0$ such that $r>r_n>t$ for $n>n_0$.
Hence, $x_n\in D(r_n)\subset D(r)$ for $n>n_0$, and $x_0=\lim\limits_{n\to\infty}x_n\in \overline{D(r)}$.
Thus $x_0\in\bigcap\limits_{r>t}\overline{D(r)}=\overline{D(t)}$.
By \eqref{e1.02}, and $x_0\in\overline{V}\cap\overline{D(t)}$, against the given assumption of $\overline{V}\cap\overline{D(t)}=\varnothing$. 
\item[(iii)] Claim that $g\equiv0$ on $D(t)_c:M^n-\overline{D(t)}$, which means $g=0$ almost everywhere in $D(t)_c$.
For an open set $V$ with $\overline{V}\cap\overline{D(t)}=\varnothing$, there exists $r'>t$ such that $\overline V\cap D(r')=\varnothing$, by (i).
Since $g\in H_{r'}$, we have that $g\equiv0$ on $V$, for any open set $V\subset D(t)_c$. It means that $g\in W^{1,2}(M^n)$ with $g\equiv 0$ on $D(t)_c$.
Since $D(t)\subset\subset M^n$, we apply the trace theorem (Theorem~\ref{T3.1}) to $D(t)$, obtaining $g\in W_{0}^{1,2}(D(t)):=H_t$. Thus the right equality $H_t=\bigcap\limits_{r>t}H_r$ of \eqref{e4.1}
is proved.  \\

Step 2. (i) Now we shall show the left equality:  $\overline{\bigcup\limits_{s<t}H_s}=H_t$ for $t\in[0,b)$. This is much easier. To show ``$\subset$'': Given $g\in\overline{\bigcup\limits_{s<t}H_s}$. For any $\varepsilon>0$, there exists $f\in\bigcup\limits_{s<t}H_s$, such that $\|g-f\|_{H_b}^2\leq\varepsilon/2$. Let $f\in H_{s}$ for some $s<t$.

(ii) By the definition of $H(D)$, $f\in H_s = H(D(s))$ means that there exist $f_k \in C^{\infty}(D(s)) \cup C^{0} (\overline {D(s)})$, such that $\int_{\partial {D(s)}} f_{k}^2 \rightarrow 0$ and $\|f_k - f \|_{W^{1,2}(D(s))}^2 \rightarrow 0$, as $k \rightarrow 0$. To claim that $g\in H_t$, we have to show that there exist $u_k \in C^{\infty}(D(t)) \cup C^{0} (\overline {D(t)})$, such that $\int_{\partial {D(t)}} u_{k}^2 \rightarrow 0$ and $\|u_k - g \|_{W^{1,2}(D(t))}^2 \rightarrow 0$, as $k \rightarrow 0$. But if we let $u_k = f_k$ on $D(s)$, and let $u_k =0$ on $D(t) \setminus \overline D(s)$, $u_k$ is not the smooth extension of $f_k$ to $D(t)$. This is the difficulty of the problem. However, the trick is using Theorem~\ref{T4.2}, there exists $h \in \mathcal {F}_{cpt}(D(s))$ to approximate $f$ in $W^{1,2}(D(s))$.

 (iii) The smooth function  $h$ satisfies $\|f-h\|_{W^{1,2}(M^n)}^2<\varepsilon/2$. Furthermore, $h\in\mathcal{F}_{cpt}(D(s))\subset\mathcal F_{cpt}(D(t))$. Therefore $\|g-h\|_{W^{1,2}(M^n)}^2<\varepsilon/2+\varepsilon/2=\varepsilon$, and hence $g\in H_t$, as required.\\

Step 3. It remains to show $\overline{\bigcup\limits_{s<t}H_s}\supset H_t$.
Given $g\in H_t$.
For any $\varepsilon>0$, there exists $h\in\mathcal F_{cpt}(D(t))$ such that $\|g-h\|_{W^{1,2}(M^n)}^2<\varepsilon$.
Let $K:=\overline{\text{supp}~h}$.
For any $p\in K\subset D(t)$, there exists $s_p<t$ such that $p\in D(s_p)\subset D(t)$, since $D(t)=\bigcup\limits_{s<t}D(s)$ by \eqref{e1.02}.
Choose an open neighborhood $N_p$ of $p$ in $D(s_p)$. Evidently, $\bigcup\{N_p\,:\,p\in K\}\supset K$.
As $K$ is compact, there exist $p_1,\dots,p_r$ such that $\bigcup\{N_{p_i}\,:\,i=1,\dots,r\}\supset K$.
Hence $K\subset D(\overline s)$ where $\overline s:=\max\{s_{p_1},\dots,s_{p_r}\}$. Thus $h\in\mathcal F_{cpt}(D(\overline s))\subset H_{\overline s}$ with $\|g-h\|_{W^{1,2}(M^n)}^2<\varepsilon$.
Therefore $g\in\overline{\bigcup\limits_{s<t}H_s}$, and the proof of the Sobolev continuity \eqref{e4.1} is completed.
\end{proof}

\begin{thm}[Eigenvalue Continuity] \label{T4.2}
Given a monotone $C^{0}$-deformation
\[
  \mathcal{D} := \{ D(t) \subset M^{n} \mid 0 \leq t \leq b \}
\]
on $M^{n}$, defined by Definition~\textup{\ref{D1.1}}. Let $\lambda_{k}(t) := \lambda_{k}(D(t))$ be the eigenvalues of the given strongly elliptic operator $L$ (see \eqref{e1.05}, \eqref{e1.11} and \eqref{e1.13}). Then each $\lambda_{k}(t)$ is continuous in $t$. Furthermore, $\lambda_{k}(t)$ is strictly decreasing in $t$. 
\end{thm}

Frid--Thayer \cite{FT90} proved the eigenvalue continuity of the stability operator for monotone $C^{\infty}$-deformations of domains $D(t)$ in $M^{n}$. In that case, the domains $D(t)$ are diffeomorphic to each other, not allowing topological change along $t$. Owing to their works, the arguments of their proof can be carried over to our case of $C^{0}$-deformations of quasi-Lipschitz domains with a given strong elliptic operator $L$ defined on the domains. The reason is that the argument of their proof reduces essentially to the Sobolev continuity. For their smooth case, where the domains are diffeomorphic to each other, the Sobolev continuity is trivial. Nevertheless, it becomes nontrivial in our framework because of the topological change of the domains. The singularities occurring at glued points are also a non-negligible factor. We have to build up the celebrated closure theorem (Corollary~\ref{C3.2}) to support the Sobolev continuity, Theorem~\ref{T4.1}. These considerations explain why we must devote considerable effort in establishing Sobolev continuity before showing eigenvalue continuity.\\

In order to confirm the eigenvalue continuity for our case, we rewrite the proof of Frid-Thayer on the eigenvalue continuity here, in a formulation consistent with the framework of this paper.

\begin{proof}[Proof of Theorem~\ref{T4.2}]
\bigskip\noindent\emph{Step~$1$.} (Right-continuity) We claim the right
continuity of ${\lambda}_{k}(t)$. Given $r_{i} \searrow t$, i.e., $\lim_{i \to
\infty} r_{i} = t$, and $t < r_{i} < r_{i-1}$, $\forall\, i=1,2,3,\ldots$.  Denote by $\{u_{k}(t)\}$ an
orthonormal basis of eigenfunctions of ${L}_{t}:=
{L}(D(t))$ in $H_{t}$, corresponding to
$\{{\lambda}_{k}(t)\}$. For each $r_{i}$, let
\[
  v_{1}(r_{i}), v_{2}(r_{i}), \ldots, v_{k}(r_{i}), \ldots
  \in H_{r_{i}}
  \subset H_{b}
\]
be an orthonormal base of eigenfunctions of ${L}_{r_{i}}$ on $D(r_i)$, which 
corresponds to ${\lambda}_{j}(r_{i})$, $j = 1,2,\ldots,k,\ldots$. Recall that $v_{k}(r_{i}):=0$ on $D(b)\setminus D(r_i)$.
For each $j$, ${\lambda}_{j}(r_{i})$ is increasing in $i$ (See $\eqref{e1.13}$), and is bounded
by ${\lambda}_{j}(t)$. Clearly, ${\lambda}_{j}(r_{i}) \to$
some $\alpha_{j}\in \mathbb{R}$, as $i \to \infty$. We will show that $\alpha_{j} =
{\lambda}_{j}(t)$.

\bigskip\noindent\emph{Step~$2$.} Fix $j \in \mathbb{N}$.
The set $\{ v_{j}(r_{1}), v_{j}(r_{2}), \ldots \}$ is bounded in $L^{2}(D(b))$,
since $\|v_{j}(r_{i})\|_{L^{2}(D(b))} = 1$. Claim that it has a subsequence
converging to $v_{j}$ in $H_{t}$. We write $v(r_{i}):= v_{j}(r_{i})$ at
the moment by dropping the index $``j"$ to simplify notations. By \eqref{e1.07} and \eqref{e1.08}, we see that
\begin{align}
B_L(v(r_i),v(r_i))&=\int_{D(r_i)}\alpha^{k\ell}D_kv(r_i)D_\ell v(r_i)+cv(r_i)^2\notag\\
\label{e4.2}&\geq\overline\alpha\|Dv(r_i)\|_{L^2(D(b))}^2+\overline c,
\end{align}
where $\overline\alpha$ and $\overline c$ are the integrals of $\alpha=\alpha(x)$ and $c=c(x)$ over $D(r_i)$. On the other hand,
\begin{align}\label{e4.3}B_L(v(r_i),v(r_i))=\lambda_j(r_i)\|v(r_i)\|_{L^2(D(r_i))}^2=\lambda_j(r_i)\leq\lambda_j(t).\end{align}
Combining \eqref{e4.2}--\eqref{e4.3}, we see that $\|Dv(r_i)\|^2<c$, independent of $i$, and hence there is a subsequence of $v(r_i)$, which converges to a function $v_j$ in $L^2(D(b))$. Still denote the subsequence by $v_j(r_i)$, $i=1,2,3,\dots$.

Remark that $v(r_i)$ is regular on $D(r_i)$, i.e., $v(r_i)\in C^\infty(D(r_i))\cap C^0(\overline{D(r_i)})$. Let $w_{i\ell}:=v(r_i)-v(r_\ell)$, $i>\ell$, $D(r_i)\supset D(r_\ell)$. Computing $\|Dw_{i\ell}\|_{L^2(D(b))}^2$, and using the integration by parts, we see that
\begin{align}\int_{D(b)}\!|Dw_{i\ell}|^2&=\int_{D(b)}\!D_hw_{i\ell}D_hw_{i\ell}\notag\\
&=\int_{D(b)}\!D_h(w_{i\ell}D_hw_{i\ell})-\int_{D(b)}w_{i\ell}(D_h^2w_{i\ell})\to0\label{e4.4}\end{align}
as $i,\ell\to\infty$, since the former is the boundary term and the latter tends to zero due to $w_{i\ell}\to0$ in $L^2(D(b))$. Thus, $Dv(r_i)$ is a Cauchy sequence in $L^2(D(b))$.
Let $D_hv(r_i)\to\xi_h$ in $L^2(D(b))$. Then for each $\varphi\in\mathcal F_{cpt}(D(b))$,
\begin{align}\label{e4.5}\langle~\xi_h~,~\varphi~\rangle=\lim_{i\to\infty}\langle~D_hv(r_i)~,~\varphi~\rangle=\lim_{i\to\infty}\langle~v(r_i)~,D_h\varphi~\rangle=\langle~v_j~,~D_h\varphi~\rangle,\end{align}
which shows that $\xi_h=D_hv_j$ and $v_j\in H_b$.

\bigskip\noindent\emph{Step~$3$.} We will show that $v_j$ is the eigenfunction of $L_t:=L(D(t))$ on $D(t)$ with eigenvalue $\alpha_j$. Given $i\in\mathbb N$, $v(r_i)$, $v(r_{i+1})$, $\dots\in H_{r_i}:=H(D(r_i))$. The limit $v_j$ is in $H_{r_i}$ for any $r_i$.
By the Sobolev continuity, $v_j\in\bigcap\limits_{i=1}^\infty H_{r_i}=\bigcap\limits_{r>t}H_r=H_t$, noting that $H_s\subset H_r$ for $s\leq r$.
Let $H_t^2$ be the closure of $\mathcal F(D(t)):=\{h\in C^\infty((D(t))\cap C^0(\overline{D(t)})\,;\,h\Big|_{\partial D(t)}=0\}$ in $W^{2,2}(D(t))$. For any $w\in H_t^2$, we see that
\begin{align*}
\langle~v_j~,~L_tw~\rangle&=\lim_{i\to\infty}\langle~v(r_i)~,~L_tw~\rangle=\lim_{i\to\infty}B_L(v(r_i),w)\\&=\lim_{i\to\infty}\lambda_j(r_i)\langle~v(r_i)~,~w~\rangle=\alpha_j\langle~v_j~,~w~\rangle,\end{align*}
which means that $B_L(v_j,\varphi)=\alpha_j\langle v_j,\varphi\rangle$ for all $\varphi\in H_t$, since $H_t^2$ is dense in $H_t$. Therefore,
\begin{align}\label{e4.6}
L_tv_j=\alpha_jv_j,\end{align}
i.e., $\alpha_j$ is an eigenvalue of $L_t$ on $H_t$.

\bigskip\noindent\emph{Step~$4$.} We now claim that $\alpha_{j}$ is the
\emph{$j$-th} eigenvalue of ${L}_{t}$ on $H_{t}$. Clearly,
${\lambda}_{j}(t) \geq \lim_{i} {\lambda}_{j}(r_{i}) =
\alpha_{j}$. Suppose ${\lambda}_{1}(t) > \alpha_{1}$. As $\alpha_{1}$
is an eigenvalue of ${L}_{t}$, $\alpha_{1}$ must be some
${\lambda}_{h}(t)$ with $h > 1$. Thus we have
\[
  {\lambda}_{1}(t)
  \leq {\lambda}_{h}(t)
  = \alpha_{1}
  \lneqq {\lambda}_{1}(t),
\]
a contradiction. Hence ${\lambda}_{1}(t) = \alpha_{1}$. Inductively,
we see ${\lambda}_{j} = \alpha_{j}$. Thus
\[ 
  \lim_{r_{i} \searrow t} {\lambda}_{j}(r_i)
  = \alpha_{j}
  = {\lambda}_{j}(t),
\]
which shows the right-continuity.

\bigskip\noindent\emph{Step~$5$.} (Left-continuity) Let $\{u_{k}\}$ be an orthonormal basis
of eigenfunctions defined in Step~1. Given $k \in \mathbb{N}$. Consider $j =
1,2,\ldots,k$. By the first equality of Theorem~\ref{T4.1}
, i.e., $\overline{\bigcup_{s
< t} H_{s}} = H_{t}$, there exist $s_{1}, s_{2}, \ldots \nearrow t$ and
$\overline{v}_{j}(s_{i}) \in H_{s_{i}} \subset H_{t}$, $\forall\, i =
1,2,\ldots$, such that $\overline{v}_{j}(s_{i}) \to u_{j}$ in $H_{t} \subset
H_{b}$ as $i \to \infty$. Using min-max principle~\eqref{e1.12}, we see that
\begin{align*}
  {\lambda}_{k}(s_{i})
  &= \min_{V^{k} \subset H_{s_{i}}}
    \{ \max_{v \in V^{k} \cap S} B_{L}(v,v) \}
    \qquad\qquad \textrm{(where $\dim V^{k} = k$)} \\
  &\leq \max_{v \in \langle \overline{v}_{1}, \ldots, \overline{v}_{k} \rangle
    \cap S} \: B_{L}(v,v) \qquad\qquad\quad\;
    \textrm{(where $\overline{v}_{j} \equiv \overline{v}_{j}(s_{i})$)} \\
  &\xrightarrow{\textrm{as } i \to \infty}
    \max_{u \in \langle u_{1}, \ldots, u_{k} \rangle \cap S} \:
    B_{L}(u,u)
    = {\lambda}_{k}(t)
    \leq {\lambda}_{k}(s_{i}), \quad \forall\, i,
\end{align*}
noting that $S$ is the unit sphere in $H_{s_i}$ or in $H_{t}$. Therefore
\[ 
  \lim_{s_{i} \nearrow t} {\lambda}_{k}(s_{i})
  = {\lambda}_{k}(t).
\]
The left-continuity is proved.

\bigskip\noindent\emph{Step~$6$.} (Strictness) It remains to show
\[
  s < t \;\; \Rightarrow \;\;
  {\lambda}_{k}(s) \gneqq {\lambda}_{k}(t),
    \;\forall\, k \in \mathbb{N}.
\]
Suppose $\exists\, k > 0$ with ${\lambda}_{k}(s) =
{\lambda}_{k}(t) \equiv \lambda$ for $s < t \leq b$. We shall
construct $u \in H_{s} \subset H_{t}$ such that
\[ 
  {L}_{s}u = \lambda u \quad \textrm{and} \quad
  {L}_{t}u = \lambda u,
\]
which is against $D(s) \subsetneqq D(t)$. Let $u_{1}, u_{2}, \ldots$ and
$v_{1}, v_{2}, \ldots$ be orthonormal bases of eigenfunctions of
${L}_{s}$ on $H_{s}$ and of ${L}_{t}$ on $H_{t}$,
respectively. Choose $u = a_{1} u_{1} + \cdots + a_{k} u_{k} \in H_{s} \subset
H_{t}$, with $\|u\| = 1$, such that $\langle \: u,v_{1} \: \rangle = \cdots =
\langle \: u,v_{k-1} \: \rangle = 0$. Clearly, such $a_{i}$'s are solvable, and
$u \in C_{0}^{\infty}(D(s))$. Write
\[ 
  u = \langle \: u, v_{k} \: \rangle \: v_{k}
    + \langle \: u,v_{k+1} \: \rangle \: v_{k+1} + \cdots.
\]
Then
\begin{equation} \label{e4.7}
\begin{split}
  \langle \: {L}_{t}u, u \: \rangle
  &= {\lambda}_{k}(t) \: \langle \: u,v_{k} \: \rangle^{2}
    + {\lambda}_{k+1}(t) \: \langle \: u, v_{k+1} \: \rangle^{2}
    + \cdots \\
  &\geq {\lambda}_{k}(t) \:(\:\langle \: u,v_{k} \: \rangle^{2}
    + \langle \: u,v_{k+1} \: \rangle^{2} + \cdots \:)
  = {\lambda}_{k}(t)
  = \lambda.
\end{split}
\end{equation}
But
\begin{equation} \label{e4.8}
\begin{split}
  \langle \: {L}_{s}u, u \: \rangle
  &= {\lambda}_{1}(s) a_{1}^{2} + \cdots
    + {\lambda}_{k}(s) a_{k}^{2} \\
  &\leq {\lambda}_{k}(s) \:(\: a_{1}^{2} + \cdots + a_{k}^{2} \:)
  = {\lambda}_{k}(s)
  = \lambda.
\end{split}
\end{equation}
Given $w \in H_{t}$, we know
\[ 
  \langle \: {L}_{s}u, w \: \rangle
  = \langle \: {L}_{t}u, w \: \rangle - E
  \neq \langle \: {L}_{t}u, w \: \rangle,
\]
in general, although ${I}_{s}(u,w) = {I}_{t}(u,w)$. Here
$E$ is the boundary term $\int_{\partial D(s)} (D_{\nu} u) w$. By choosing $w =
u \in H_{s} \subset H_{t}$, we have $w \equiv 0$ on $\partial D(s)$ and hence
$E = 0$. Thus
\begin{equation} \label{e4.9}
  \langle \: {L}_{s}u, u \: \rangle
  = \langle \: {L}_{t}u, u \: \rangle.
\end{equation}
A combination of \eqref{e4.7}, \eqref{e4.8} and \eqref{e4.9} yields that
$\lambda \geq \langle \: {L}_{s}u, u \: \rangle = \langle \:
{L}_{t}u, u \: \rangle \geq \lambda$. It forces all the inequalities
in \eqref{e4.7} and \eqref{e4.8} to be equalities. Hence
${\lambda}_{1}(s) = {\lambda}_{2}(s) = \cdots =
{\lambda}_{k}(s) = \lambda$, and ${L}_{s}u = \lambda u$. On
the other hand, ${\lambda}_{l}(t) \to \infty$ as $l \to \infty$.
Looking at \eqref{e4.7}, there is an $m \geq 0$ such that $u = \langle \:
u,v_{k} \: \rangle \: v_{k} + \cdots + \langle \: u,v_{k+m} \: \rangle \:
v_{k+m}$, and each of $v_{k}, \ldots, v_{k+m}$ has the same eigenvalue
$\lambda$. Thus
\[{L}_{t}u = \lambda u.\]
Now $u \in H_{s} \subset H_{t}$, $u \equiv 0$ on $D(t) - D(s)$. By the
assumption that $D(t) - D(s) \neq \phi$, there is a nonempty open set $U$
contained in $D(t) - D(s)$. Using Hopf's sphere theorem, we see that $u \equiv
0$ on $D(t)$, contradicting $\|u\| = 1$.
\end{proof}

\section{The existence}

We are ready to show the existence of $C^0$-deformations allowing topological changes as stated in Theorem~\ref{TA}. Let the operator $L$ on $M^{n}$ and a quasi-Lipschitz domain $D$ in $M^{n}$ be given in Theorem~\ref{TA}. For any $p_{0} \in D$, we shall construct the required $C^{0}$- deformation on $M^{n}$, along which both Sobolev continuity and eigenvalue continuity of $L$ hold, such that $D(0)$ is a small $n$-ball $B^{n}(p_{0})$ and $D(b)$= the given $D$.\\

We first present two amusing examples in lower dimensions, before presenting the general construction of the $C^0$-deformation in the proof of Theorem~\ref{TA}.

\begin{ex}\label{ExC}
Consider an annular domain $D:= S^{1}\times I$ in the plane $M^2$ ($\approx \mathbb{R}^2$),
 where $I$:= the interval $[0,1]$. Its boundary consists of $\Gamma_{+}$ and $ \Gamma_{-}$ which are the outer and the inner simple curves respectively. See Figure~\ref{F5.1}(a). Let $B^{2}(p_0)$ be a small 2-ball $\subset D \subset M^2$ centered at $p_{0}\in D$ with radius $\delta$. 
 
 We shall construct a $C^0$-deformation with $D(0)=B^{2}(p_0)$ and $D(b)=D$. To visualize the construction, we pull $\Gamma_{+}$ upward along the positive $z$-axis and pull $\Gamma_{-}$  downward along the negative $z$-axis, as illustrated in Figure~\ref{F5.1}(b). The inner complementary domain $D_{c}^{-}$ becomes a bowl-shaped region at the bottom of  the cylinder, while the outer complementary domain $D_{c}^{+}$
lies above $D$. See Figure~\ref{F5.1}(b).
\begin{figure}[!htb]
\centering
\begin{minipage}[t]{0.6\textwidth}
\centering\includegraphics[scale=0.4]{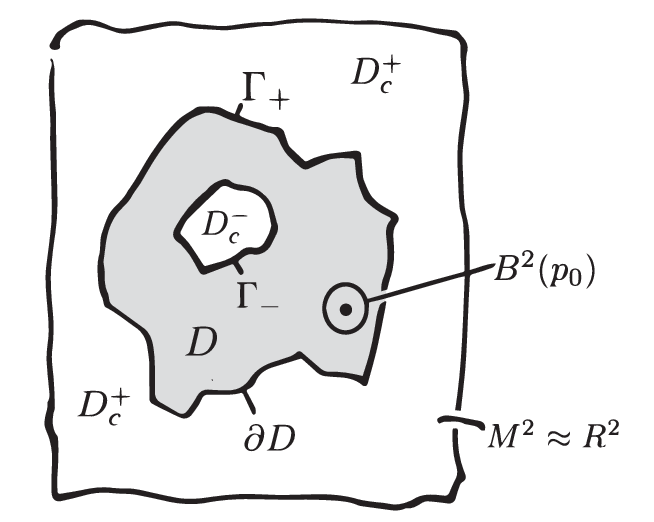}

(a) the given Lipschitz domain $D$\\\hspace{-5.4em}with one hole
\end{minipage}\hspace{-3em}
\begin{minipage}[t]{0.45\textwidth}\centering\includegraphics[scale=0.4]{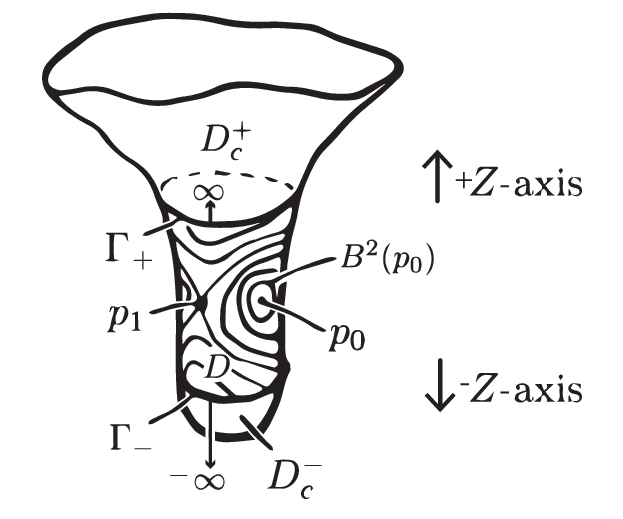}

(b) deforming $B^2(p_0)$ to\\~~~one-hole domain $D$
\end{minipage}
\caption{Example~\ref{ExC}} \label{F5.1} 
\end{figure}

Observe that $D$ now appears as a region like a vertical cylinder, where $D_{c}^{+}$ extends to infinity. Moreover, $M^{2}= D_{c}^{-} \cup \overline{D} \cup D_{c}^{+}$ is diffemorphic to $\mathbb{R}^2$. Let the small ball $D(0)=B^{2}(p_0)$ expand inside the annular domain $D$ toward the opposite side of the cylinder. At the critical point $p_1$ of the  function $h(p):= (dist_{M^2}(p,p_0))^{2}-\delta^{2}$, the boundary $\partial {D(t)}$ develops a singularity where two boundary points become glued together at the critical point $p_1$ . (Compare this with Step 1 in the proof of Theorem~\ref{TA}, which shall be provided later after Example~\ref{ExD}.)

 The sector domain $\Lambda$ then appears around $p_1$. See Figure~\ref{F5.1}(b). Indeed, $D(t)$ is the sublevel set of $h$. After the critical time $t>h(p_1)$, the domain $D(t)$ continues to expand until it reaches the two boundary circles $\Gamma_{+} \cup \Gamma_{-}=\partial{D}$. Hence, at the end of the deformation, $D(b)=D$.\end{ex}

\begin{figure}[!htb]
\centering
\begin{minipage}[t]{0.55\textwidth}
\centering\includegraphics[scale=0.4]{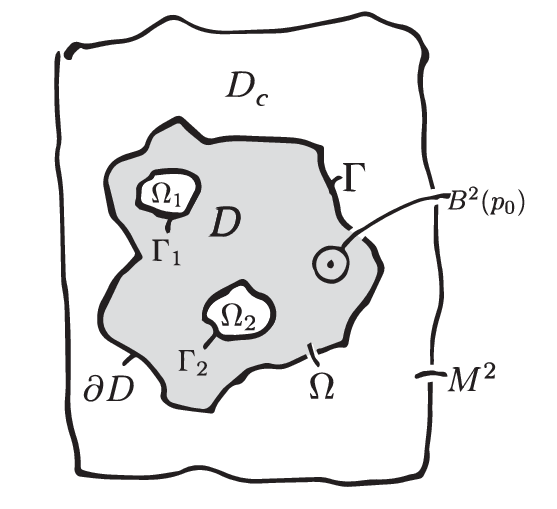}

(a) the given Lipschitz domain $D$ with\\\hspace{-1.9em}two holes $D:=\Omega\setminus(\Omega_1\cup\Omega_2)$
\end{minipage}
\hspace{-1em}
\begin{minipage}[t]{0.45\textwidth}
\centering
\includegraphics[scale=0.4]{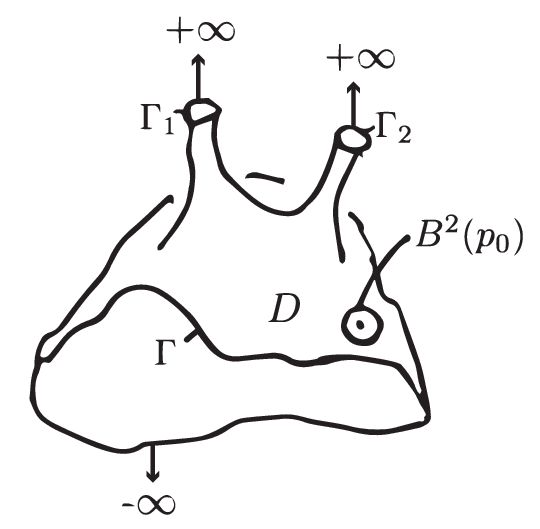}

(b) 1st step to visualize $C^0$-deformation\\\hspace{-3em}on a two-hole domain $D$
\end{minipage}
\caption{Example~\ref{ExD}.}\label{F5.2}
\end{figure}
\begin{ex}\label{ExD}
Consider a domain $D$ in 
 a plane $M^2$ with two holes, namely $D:= \Omega \setminus$ {$\Omega_{1} \cup \Omega_{2}$} where $\Omega$ is a large disk, and
$\Omega_{1}, \Omega_{2}$ are two small disks compactly contained in $\Omega$.
See Figure~\ref{F5.2}(a).
Let $\Gamma:=\partial {\Omega}$, $\Gamma_{1}:=\partial {\Omega_{1}}$, and $\Gamma_{2}:=\partial {\Omega_{2}}$.
As in Example~\ref{ExC}, we shall deform a given small  2-ball $B^{2}(p_0)=D(0)$ inside $D$ into the entire domain $D=D(b)$.
Note that the so-called 2-ball is exactly a disk.
To visualize the deformation, pull the outer boundary circle $\Gamma$ downward along the negative z-axis, while pulling $\Gamma_{1}$ and $\Gamma_{2}$ upward along the positive z-axis.
See Figure~\ref{F5.2}(b). Then the domain $D$ resembles a tree with a vertical trunk and two branches stretching upward into the sky. See Figure~\ref{F5.2cd}(c).
\setcounter{figure}{1}
\begin{figure}[!htb]
\centering
\begin{minipage}[t]{0.55\textwidth}\centering\includegraphics[scale=0.45]{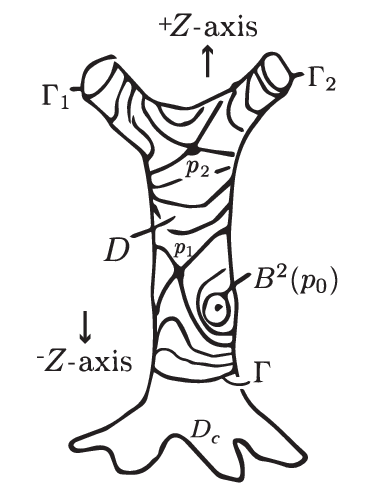}

(c) deforming $B^2(p_0)$ to\\ \qquad a two-hole domain $D$
\end{minipage}
\hspace{-3em}
\begin{minipage}[t]{0.45\textwidth}
\centering\includegraphics[scale=0.4]{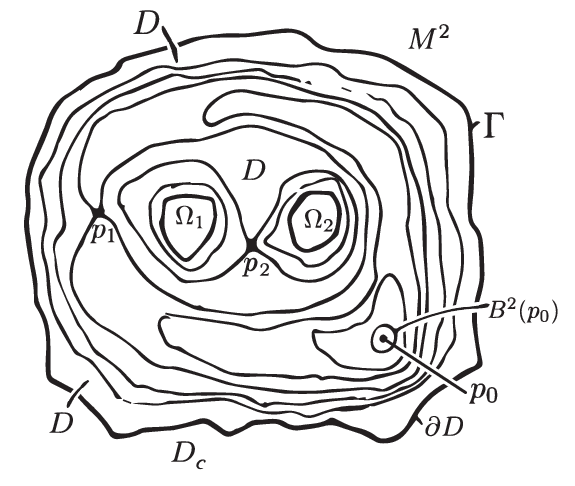}

(d) restore the deformation on the\\~~~~~given planar $D$ with two holes
\end{minipage}
\caption{Example~\ref{ExD}.}\label{F5.2cd}
\end{figure}

Starting from the small disk $B^{2}(p_0)=D(0)$, the sublevel sets $D(t)$ expand along the trunk as in Example~\ref{ExC}. The given small 2-ball $B^{2}(p_0)=D(0)$ on the trunk expands until a first singular transition occurs at the critical point $p_1$ of the distance function $h=h(p):= (dist_{M^2}(p,p_0))^2 - \delta^2$. At the critical level $t=h(p_1)$, two points on the $\partial{D}$ become  glued together at $p_{1}$. Compare Figure~\ref{F5.2cd}(c). A sector domain appears around $p_1$.\\

For $t>h(p_1)$, the domain  $D(t)$ continues expanding with its upper boundary curve approaching toward the two upper branches. A second singular transition occurs at another critical point $p_2$ where $t=h(p_2)$. See Figure~\ref{F5.2}(c).
Again, two boundary points of $D(t)$  become glued together, and 
another sector domain appears around $p_2$.
After passing the second critical level, the expanding domains $D(t)$ eventually reach all three boundary components $\Gamma$, $\Gamma_{1}$ and $\Gamma_{2}$. Hence, $D(b)=D$ at the end of the deformation. It is interesting to see how $D(t)$ deforms in the originally given planar domain. We draw Figure~\ref{F5.2cd}(d) to restore the expanding process of $D(t)$.

\end{ex}

We now present a general process of the $C^0$-deformation from a small $n$-ball to a given Lipschitz domain $D$.

\renewcommand{\proofnamefont}{\bfseries}
\begin{proof}[Proof of Theorem~\textup{\ref{TA}}]
For the first part of Theorem~\ref{TA}, the proof has already been obtained from the continuity theorems, Theorems~\ref{T4.1} and \ref{T4.2}.  It remains to prove the existence of the required $C^0$-deformation of domains. 

\emph{Step~$1$.} Given any $p_{0} \in D$, and a small n-ball $B^{n}(p_0)$ centered at $p_0$ with radius $\delta >0$. Consider $h \colon D \xrightarrow{C^{\infty}} \mathbb{R}^{1}$ defined by $h(p) := (\operatorname{dist}_{M^{n}}(p,p_{0}))^{2}- \delta ^2$ for all $p \in D$. We may assume by a slight modification of $h$ that $h$ is \emph{nondegenerate} in the sense that the Hessian $\operatorname{Hess}(h)$ of $h$ has no zero eigenvalue. This is possible since the set of nondegenerate functions is dense in $C^{\infty}(D,\mathbb{R})$, which is equipped with the compact-open topology. Let $\{ p_{0},p_{1},p_{2},\ldots,p_{\mu} \}$ be the set of critical points of $h$ in $D$. Again, by a slight perturbation, we assume no $p_{j}$ with $h(p_{j}) = h(p_{j+1})$, $j = 0,1,\ldots,\mu-1$. Let
\[
  h(p_{0}) < h(p_{1}) < h(p_{2}) < \cdots < h(p_{\mu}).
\]
Define
\[
  D(t) := \{ p \in D \mid h(p) < t \}
\]
the ``sublevel set" of $h$ at $t$. Then $D(0)$ = $n$-ball $B^{n}(p_{0})$. Write $t_{j} := h(p_{j}),~ j=1,2,\ldots \mu$.  Consider the sequence
\[
  0 <  t_{1} < c_{1} < t_{2} < \cdots < c_{\mu-1} < t_{\mu}<c_{\mu} < b.
\] We will show that
\[
  \mathcal{D} := \{ D(t) \subset M^{n} \mid t \in [0,b] \}
\]
is a monotone $C^{0}$-deformation on $M^{n}$ where $D(0) = B^{n}(p_{0})$, $D(b)$ is the given Lipschitz domain $D$ in $M^{n}$, where $b := \sup \{ h(p) \mid p \in D \}$.\\

\emph{Step~$2$.} To ensure that
all the critical points $p_j$ of $h$ are sitting in an interior domain $D_0$ compactly contained in $D$, we have to modify the metric of $D$ around its boundary by stretching $\partial D$ sufficiently far away, so that $\partial D$ is disjoint from the closure of the sublevel set $D(c_{\mu}) := \{ p \in D ; h(p)<c_{\mu} \}$, where $t_{\mu}<c_{\mu}<b$. In fact, let $N_{\epsilon}$ be a  tubular neighborhood of $\partial D$ in $D$ in the begining. Namely, let $N_{\epsilon}$ := $\{ p \in D ; \zeta(p):=\operatorname{dist}_{M^{n}}(p, \partial D) < \epsilon \}$, $\epsilon > 0$ a small number. We remodel the metric $g_{ij}$ of $M^{n}$ in $N_{\epsilon}$ by
\[
  \widetilde{g}_{ij}(p) := (\Lambda \cdot (\zeta(p)-\epsilon)^{2} + 1) g_{ij}(p)
\]
with $\Lambda$ sufficiently large. Then we can have $h(N_{\epsilon/2}) > c_{\mu}$, and $\{p_{j}; j=1,2,\ldots,\mu\}\subset D_0$ := $D \setminus \overline{N_{\epsilon /2}}$.\\

\emph{Step~$3$.} Classical Morse theory tells us that $D$ is homotopic to the cell-complex $C_{h}$, constituted by the family $\{ \sigma_{j} \mid j = 0,1,2,\ldots,\mu \}$ of cells $\sigma_{j}$ with $\gamma_{j} := \dim \sigma_{j}$ = the index of the Hessian $\operatorname{Hess}(h)$ of $h$ at $p_{j}$, $j = 0,1,2,\ldots,\mu$. Furthermore, it says that any two sublevel sets $D(t)$ with $t_{j-1} < t < t_{j}$ are homotopic to each other along the integral curves of the gradient $\nabla h$ of $h$ on $D$. At $t_{j} = h(p_{j})$, consider $\widehat{D}_{j} := \sigma_{j} \bigcup_{\varphi_{j}} D(c_{j-1})$, where $\partial \sigma_{j}$ is diffeomorphic to the $(\gamma_{j}-1)$-dimensional sphere $S^{\gamma_{j}-1}$, and it is glued to $\partial D(c_{j-1})$ by a ``diffeomorphic into" map $\varphi_{j} \colon \partial \sigma_{j} \to \partial D(c_{j-1})$. (See Figures~\ref{F5.3} and Figures~\ref{F5.4} for a special case.) As $t$ increases from $c_{j-1}$ to $t_{j}$, $\widehat{D}_{j}(t) := \sigma_{j}(t) \bigcup_{\varphi_{j}(t)} D(t)$ varies to become $D(t_{j})$ with the $\gamma_{j}$-cell $\sigma_{j}(t)$ shrinked to the point $p_{j}$. To visualize the gluing, see also the following Step~4. Remark that $D(t_{j})$ is a quasi-Lipschitz domain in $M^{n}$ with glued point $p_{j}$, where the model space $\Omega$ is $D(c_{j-1})$ and the preglued set $\alpha^{k}$ = $\partial \sigma_{j} \subset \partial D(c_{j-1}) = \partial \Omega$. Check Definition~\ref{D2.5}, if needed. The dimension $k = k(j)$ of $\alpha^{k}$ depends on $j$, and equals $\gamma_{j}-1$, i.e., $k = \gamma_{j}-1$.\\

\emph{Step~$4$.} Clearly, the index $\gamma_{0}$ of $\operatorname{Hess}(h)$ is zero at $p_{o}$, since $h$ is nondegenerate and attains minimum at $p_{0}$. Thus the sublevel set $D(0)$ is diffeomorphic to an $n$-ball $B^{n}(p_{0})$. We illustrate the arguments developed in Step~3 by a simple case that $n = 3$, $j=1,~ p_{j} = p_{1}$, and $\gamma_{1} := \dim \sigma_{1} = 1$. See Figure~\ref{F5.4}(a) and Example~\ref{ExF}. Remark that $D(0)$ is a 3-ball, which is illustrated in Figure~\ref{F5.4}(a) by the exterior 3-ball $B^{3}_{c} \cup {p_\infty}$ as before in Example~\ref{ExA}. 

Here $\sigma_{1}$ is an open interval $(q'_{0},q'_{1})$ and $\partial \sigma_{1}$ is the set of two points $\{ q'_{0}, q'_{1} \}$. Let $\widehat{D}_{1} := \sigma_{1} \bigcup_{\varphi_{1}} D(0)$, which looks like a 3-ball $D(0)$ attached with a $1$-hande $\sigma_{1} = (q'_{0},q'_{1})$ with $\partial \sigma_{1} = \{ q'_{0}, q'_{1} \}$ glued on the boundary $\partial D(0)$ of the 3-ball $B^{3}_{c} \cup {p_\infty}$. See Figure~\ref{F5.4}(a). Here the model space $\Omega$ is the 3-ball $D(0)$, and the preglued set is $\alpha^{0} := \partial \sigma_{1} = \{ q'_{0}, q'_{1} \}$. As $t$ increases from $0$ to $t_{1} := h(p_{1})$, the two points $q'_{0}$ and $q'_{1}$ get closer and closer to each other, and finally glued together at the critical point $p_{1}$ of $h$. See Figure~\ref{F5.4}(b), Figure~\ref{F5.4}(c) and Example~\ref{ExF}. By $h$ nondegenerate at $p_{1}$, the shaded area $D(t_{1})$ is locally a ``sector domain" around $p_{1}$, and hence the sublevel set $D(t_{1})$ is a quasi-Lipschitz domain in $M^{3}$. The topological structure of $D(t)$ changes at $t = t_{1}$. When $t$ increases further from $t_{1}$ to $c_{1}$, $D(t)$ expands along $\nabla h$ to become $D(c_{1})$, and the topology of $D(t)$ changes from a 3-ball to a solid torus $D(c_{1})$.\\


\emph{Step~$5$.} In summary, we have constructed a $C^{0}$-deformation $\mathcal{D}$ of enlarging quasi-Lipschitz domains in $M^{n}$. In fact, each domain $D(t)$ has smooth boundary, except at $t = t_{j} := h(p_{j})$ for some $j = 1,2,\ldots,\mu$, where a glued point $p_{j}$ of $D(t_{j})$ appears with preglued set $\alpha^{k} = \partial \sigma_{j}$, and $D(t_{j})$ is a quasi-Lipschitz domain in $M^{n}$, satisfying the conditions of Definition~\ref{D2.5}. We note that $D(t_{j})$ is a sector domain around $p_{j}$, because : (1)~$h$ is nondegenerate at $p_j$; (2)~$D(t_{j})$ tangents to the portion of the tangent space $T_{p_{j}}(M^{n})$ at $p_{j}$ on which $\operatorname{Hess}(h)$ is negative. This is due to the well-known Morse lemma (see John Milnor: Morse theory).\\

\emph{Step~$6$.} As $t > h(p_\mu)$, the sublevel set $D(t)$ of $h$ continuously expands, and it reaches the boundary $\partial D$ at $t=b$ with the modification of metric $g_{i,j}$ in Step 2, since $b = sup\{h(p); p \in D$\}. 
Hence we obtain the required $C^{0}$-deformation $\mathcal{D} := \{ D(t); t\in [0,b]\}$ with $D(0) = B^{n}(p_{0})$, and $D(b)$ is the given quasi-Lipschitz domain $D$ in $M^{n}$. This completes the proof of Theorem~\ref{TA}. 
\end{proof}

In Examples~\ref{ExA} and \ref{ExB} of Section 2, we illustrate  two typical types of quasi-Lipschitz domains: set-glued and point-glued. Now we realize the process of topological changes of the quasi-Lipschitz domains in the $C^0$-deformations around the critical points of the  distance function $h$, based on  these two types of quasi-Lipschitz domains.

\begin{ex} \label{ExE}
Consider $n = 3$, $p_{j} = p_{1}$, at which the index is $2$, i.e., the corresponding cell $\sigma_{1}$ has dimension $\gamma_{1} = 2$, and $\sigma_{1} \approx $ 2-disk. Here the sample space $\Omega$ is the sublevel set $D(0) \approx B^{3}(p_{0})$ with the pre-glued set $\alpha^{k} = \alpha^{1} = \partial \sigma_{1}$ = the circle $S^{1} \subset \partial D(0) \approx \partial B^{3}(p_{0}) \approx$ a $2$-sphere $S^{2}$. As in Example~\ref{ExA}, we exchange the interior and exterior of $B^{3}(p_{0})$, by the standard inversion, letting $p_{0} = p_{\infty}$ at infinity (see Figure~\ref{F5.3}(a)). In Figure~\ref{F5.3}(a), $\widehat{D}_{1}$ = the $2$-disk $\sigma_{1}$ glued to $B^{3}(p_{\infty})$ on its boundary $\partial B^{3}(p_{\infty})$. As $t$ increases from $0$ to $t_{1}$, the 2-disk $ \sigma_{1}(t) $ shrinks to the point $p_1$ (see Figures~\ref{F5.3}(b) and \ref{F5.3}(c)). Finally, we obtain $D(t_{1})$ as the whole $3$-space $\mathbb{R}^{3}$, compactified by adding the point $p_{\infty}$ at infinity, from which two small balls $B_{1}$ and $B_{2}$ are dug out, while $\overline{B}_{1} \cap \overline{B}_{2} = \{p_1\}$.
\begin{figure}[!h]
\centering
\includegraphics[scale=0.5]{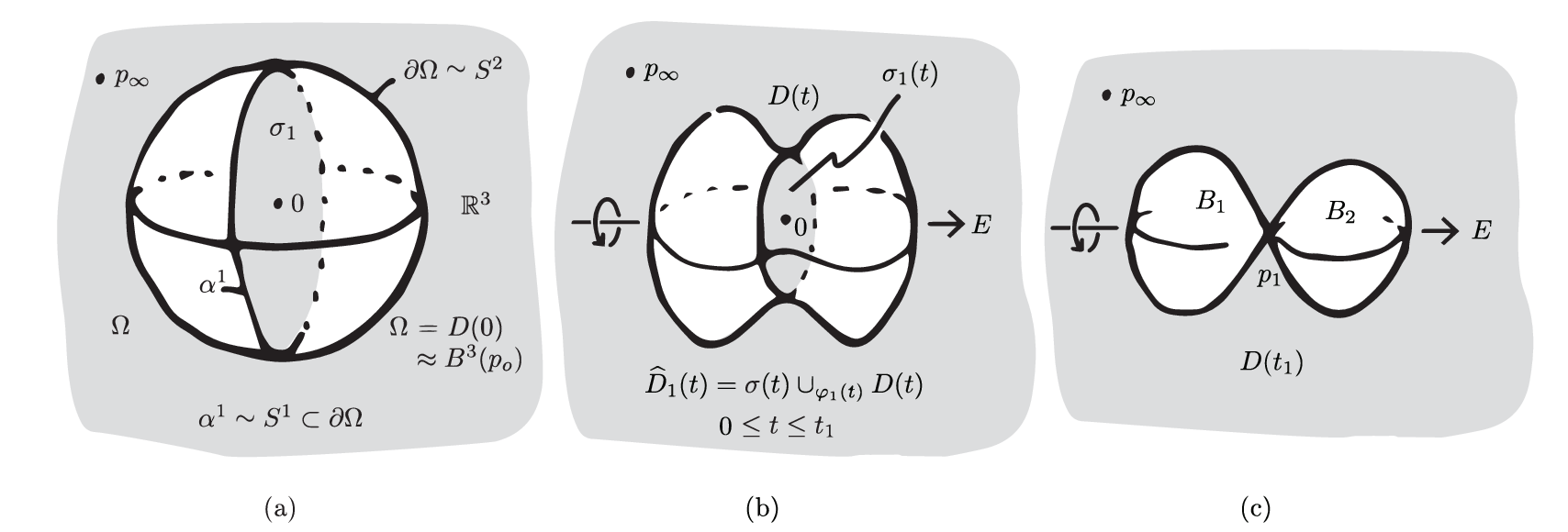}
\caption{Example~\ref{ExE} - index of $h=2$; construction}
of $C^0$-deformation, changing topology from (a) to (c). \label{F5.3}
\end{figure}
\end{ex}

\begin{ex}\label{ExF}
Another example to visualize the construction of the $C^{0}$-deformation in the proof of Theorem~\ref{TA} is relying on Example~\ref{ExB}. Consider $n = 3$, $p_{j} = p_{1}$, at which the index $\gamma_{1}$ of $\operatorname{Hess}(h)$ is $1$, $\sigma_{1} =$ $1$-cell $=$ the interval $(q'_{0},q'_{1})$ with $\partial \sigma_{1}$ = two points $\{ q'_{0}, q'_{1} \}$ glued on $\partial D(0) \approx S^{2}$ (see Figure~\ref{F5.4}(a)). As $t$ increases from $0$ to $t_{1}$, the interval $\sigma_{1}(t) = (q'_{0}(t),q'_{1}(t))$ shrinks into the critical point $p_{1}$ (see Figures~\ref{F5.4}(b) and \ref{F5.4}(c)), which is the glued point of the quasi-Lipschitz domain $D(t_{1})$. Note that $D(t_{1})$ is the whole $3$-space $\mathbb{R}^{3}$, compactified with the point $p_{\infty}$ at infinity, from which a solid torus $V^{2} \times S^{1}$ is dug out, i.e., $D(t_{1}) \approx B^{3} \setminus (V^{2} \times S^{1})$, where $V^2$ is a planar sector. See Figure~\ref{F5.4}(c).
\begin{figure}[!h]
\centering
\includegraphics[scale=0.5]{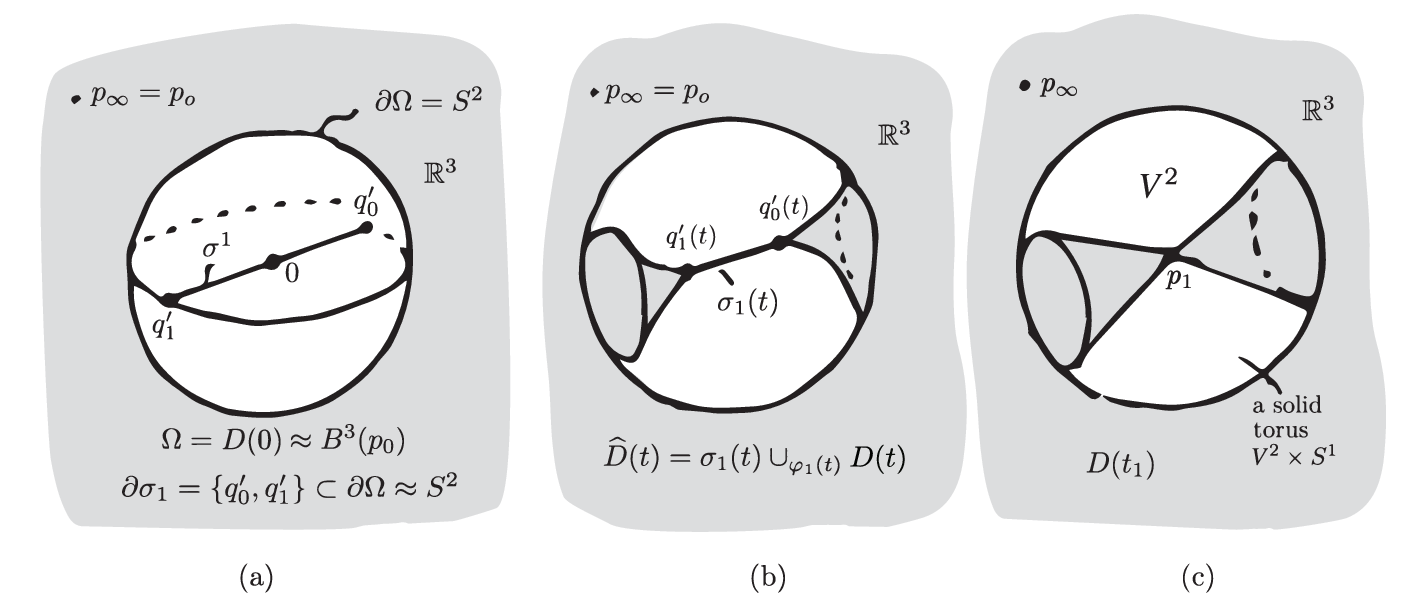}
\caption{Example~\ref{ExF} - index of $h=1$; construction}
of $C^0$-deformation, changing topology from (a) to (c).\label{F5.4}
\end{figure}
\end{ex}

\begin{proof}[Proof of Theorem~\textup{\ref{TB}}]

We prove the global Morse index theorem in the following.

\emph{Step~$1$.} Given $\varphi \in H_{t} = H(D(t))$, we expand
\begin{equation}\label{e5.1} \varphi = a_{1} \: {u}_{1} + a_{2} \: {u}_{2} + \cdots
\end{equation}
in $L^{2}(D(t))$, where the eigenfunctions ${u}_{k} \in
\mathcal {F}_{0}(D(t))$ are the orthonormal basis. Then for any $g \in H_{t}$,
\begin{equation} \label{e5.2}
 B_{L}(\varphi,g)
  = \langle \: a_{1}\: \lambda_{1} {u}_{1}
    + a_{2} \: \lambda_{2} {u}_{2} + \cdots, \; g
    \:\rangle = 0.
\end{equation}
if and only if 
  $a_{i} = 0, \;\forall\, i$ with ${\lambda}_{i}(t) \neq 0$.
The ``if" part is evident, and the ``only if" part  can be seen by letting $g = {u}_{i}$. Thus there exists a (non trivial) Jacobi field on $D(t)$ if and only if $B_L$ on $D(t)$ has a zero eigen value.
Hence, the nulity ${\nu}(t)$ is the multiplicity of the zero eigenvalues of
$B_{L}$.

\bigskip\noindent\emph{Step~$2$.} Let the eigenvalues ${\lambda}_{j}$
on $D(t)$ be written in the form
\begin{equation} \label{e5.3}
  \lambda_{1}
  \leq {\lambda}_{2}
  \leq \cdots
  \leq {\lambda}_{k}
  < 0
  \leq {\lambda}_{k+1}
  \leq {\lambda}_{k+2}
  \leq \cdots
  \to \infty.
\end{equation}
When $D(s)$ enlarges from $0$ to $t$, each of ${\lambda}_{1},
{\lambda}_{2}, \ldots, {\lambda}_{k}$  decreases and continuously passes
through zero   to become negative. We see these by \eqref{e1.13} and Theorem~\ref{T4.2}. It is evident that $\sum_{0 \leq s < t}
{\nu}(t) = k$, since as $s$ varies from $0$ to $t$, there occurs a
Jacobi field on $D(s)$, if and only if some ${\lambda}_{j}(s)$ equals 0,
for $j$ with $1 < j \leq k$, counting the multiplicity.

\bigskip\noindent\emph{Step~$3$.} On the other hand, the Morse index $Ind(D(t))$ is definded by the maiximal dimension of linear subspaces of $H(D(t))$ on which $B_L$ is negatively definite. It means that $Ind(D(t))$ = number of negative eigeinvalues of $B_L$.
By the sequence \eqref{e5.3}, in which the eigenvalues $\lambda_j$ are linearly ordered, it is clear that $Ind(D(t))$ = $k$ = $\sum_{0 \leq s < t}
{\nu}(t)$. This completes the proof.
\end{proof}

In an earlier paper with C.C. Lin \cite{H98}, we introduced the notion of \emph{extremal domains} to initiate the study of Jacobi fields on CMC hypersurfaces $M^n$ immersed in $\mathbb{R}^{n+1}$. In \cite{H24} we obtained  a global Morse index theorem of generalized Lipschitz domains to answer the distribution question of Jacobi fields. The following Theorem~\ref{TC} improves
the results of \cite{H24} by constructing a $C^{0}$-deformation of quasi-Lipschitz domains on the CMC hypersurface $M^n$, for which the global Morse index theorem is also valid.

As in Theorems~\ref{TA} and \ref{TB}, the $C^{0}$-deformation allows topological changes of the  domains while preserving Sobolev continuity and the continuity of eigenvalues along $t$.

\begin{thmm} \label{TC}
Let $M^{n}$ be a hypersurface of constant mean curvature (CMC) immersed in $\mathbb{R}^{n+1}$, $D \subset M^{n}$ be a quasi-Lipschitz domain in $M^{n}$, and $L$ be the stability operator
\begin{equation} \label{e5.4}
  Lf := -\Delta_{M}f - \|B\|^{2} f, \quad \forall\, f \in \mathcal{F}(D)
\end{equation}
extended to the Sobolev sense (see \textup{\cite{H24}} for details), where $\Delta_{M} := (*)_{ii}$ means the Laplacian operator relative to the metric of $M^{n}$, and $\|B\|^{2}$ denotes the length of the second fundamental form of $M^{n}$ in $\mathbb{R}^{n+1}$. Given $p_{0} \in D$. Then there exists a $C^{0}$-deformation $\mathcal{D}$ of enlarging quasi-Lipschitz domains in $D$, where
\[
  \mathcal{D} := \{ D(t) \subset D \mid 0 \leq t \leq b \},
\]
$D(0)=B^{n}(p_{0})$, and $D(b) = D$. The index $\operatorname{Ind}(D)$ of $L$ on $D$ can be counted by \eqref{e1.15}, where $\nu(D(t))$ is the nullity of $L$ on $D(t)$. Indeed, given any $C^{0}$-deformation $\mathcal{D}$ of enlarging quasi-Lipschitz domains in $M^{n}$, the Sobolev continuity in $t$ holds(see \eqref{e1.14}), the eigenvalues $\lambda_{k}(t)$ of $L$ are continuous in $t$, and the Morse index theorem \eqref{e1.15} is valid. Furthermore, instead of the Dirichlet case considered in \eqref{e5.4}, we consider the stability operator
\[
  \widetilde{L}g
  := -\Delta_{M}g - \|B\|^{2}g - \frac{1}{|D|} \int_{D} Lg,
    \quad \forall\, g \in \mathcal{G}(D)
\]
for the case with volume constraint, where
\[
  \mathcal{G}(D)
  := \left\{ g \in \mathcal{F}(D) \Bigm| \int_{D} f \, dM = 0 \right\}.
\]
The previous assertions in the Dirichlet case are also valid for the case with volume constraint.
\end{thmm}

\begin{proof}[Proof of Theorem~\textup{\ref{TC}}]
Since the construction of the $C^{0}$-deformation in the proof of Theorem~\ref{TA} involves only topology and the differentiable structure, it is still applicable to CMC hypersurfaces in $\mathbb{R}^{n+1}$. To show the global Morse index theorem \eqref{e1.15}, we need Sobolev continuity and eigenvalue continuity proved in Theorems~\ref{T4.1} and \ref{T4.2}, as well as the arguments in Sections 4.1 and 4.2 in \cite{H24}. These are now established and available. The proof of Theorem~\ref{TC} is thus completed.
\end{proof}
\renewcommand{\proofnamefont}{\itshape}

\section{Concluding remarks} \label{S6}
(i) In 1965, Smale \cite{S65} extended the Morse index theorem on geodesics to smooth domains enlarging in manifolds. The deformation $\mathcal{D}$ was assumed to be $C^{\infty}$, meaning that
all $D(t)\subset M^n$ must be of the same topological type. Hence the initial domain $D(0)$ of Smale's ``$\epsilon$-type" must share the topology same as the final domain $D=D(b)$.
For example, if $M^2$ is the cylinder $S^{1}\times \mathbb{R}^{1}$, and $D$ a ring domain in $M^2$, the $\epsilon$-type initial domain $D(0)$ must be also a ring domain $S^{1}\times (-\varepsilon', \varepsilon')$, which is very thin with $\varepsilon' = O(\varepsilon)$.
Theorem~\ref{TB} in this paper removes the $C^\infty$-deformation restriction  and extends the Morse index theorem to a broader class of $C^{0}$-deformations where the topology of $D(t)$ is allowed to change along $t$. Thus, the initial domain $D(0)$ can be arbitrary, such as a ball $B^n(p_o)$ around a point $p_o$, and the final domain $D(b)$ can be a given quasi-Lipschitz domain of any shape. Furthermore, given two arbitrary quasi-Lipschitz domains $D$ and $D'$ on a manifold $M^n$, we can deform $D$ to a small n-ball $B^n(p_0)$ by shrinking and then deform the n-ball to $D'$ by expanding. Therefore, we obtain the existence of a $C^0$-deformation on which the Morse index theorem holds, with $D(0)=D$, $D(b)=D'$.\\

\noindent (ii) The proof of the Sobolev continuity in the smooth setting is trivial. Following Smale's smooth framework, Frid-Thayer \cite{FT90} proved the eigenvalue continuity based on the Sobolev continuity, and a Morse index theorem in an abstract form. See Remark 3.2 in \cite{H24}.\\

\noindent (iii) The work \cite{H24} of our previous paper was focused on generalized Lipschitz domains deforming in CMC hypersurfaces of $\mathbb{R}^{n+1}$.  The streamlined proof of the Sobolev continuity, i.e. Theorem~\ref{T4.1}, in the present paper improves the complicated arguments illustrated in the proof of Theorem~S of \cite{H24}, and facilitates better understanding. They also help to clarify the tedious reasoning in certain contexts of \cite{H24}.


\begin{flushleft}
Wu-Hsiung Huang \\ 
Department of Mathematics\\ National Taiwan University\\  Taipei 10617, Taiwan \\ 
\emph{E-mail address}: 
\texttt{whuang0706@gmail.com}~;\\
\texttt{whuang@math.ntu.edu.tw}\\
ORCID: 0009-0000-5487-3868

\end{flushleft}
\end{document}